\let\chapter\undefined
\def\NZQ{\mathbb}               
\def\NN{{\NZQ N}}
\def\FF{{\NZQ F}}
\def\TT{{\NZQ T}}
\def\frk{\mathfrak}               
\def\Phi{{\frk n}}
\def\Phi{{\frk N}}
\def\kb{{\mathbf k}}
\def\xb{{\mathbf x}}
\def\yb{{\mathbf y}}
\def\pb{{\mathbf p}}
\def\A{{\mathcal A}}
\def\Fc{{\mathcal F}}
\def\Sc{{\mathcal S}}
\def\P{{\mathcal P}}
\def\C{{\mathcal C}}
\def\Dc{{\mathcal D}}
\def\DP{{\mathcal DP}}
\def\Fc{{\mathcal F}}
\def\Sc{{\mathcal S}}
\def\xb{{\mathbf x}}
\def\yb{{\mathbf y}}
\def\opn#1#2{\def#1{\operatorname{#2}}} 
\opn\chara{char} \opn\length{\ell} \opn\pd{projdim} \opn\rk{rk}
\opn\projdim{proj\,dim} \opn\injdim{inj\,dim} \opn\rank{rank}
\opn\depth{depth} \opn\grade{grade} \opn\height{height}
\opn\embdim{emb\,dim} \opn\codim{codim}
\opn\Tr{Tr} \opn\bigrank{big\,rank}
\opn\superheight{superheight}\opn\lcm{lcm}
\opn\trdeg{tr\,deg}
\opn\reg{reg} \opn\lreg{lreg} \opn\ini{in} \opn\lpd{lpd}
\opn\size{size} \opn\sdepth{sdepth}
\opn\link{link}\opn\fdepth{fdepth}\opn\lex{lex}
\opn\LM{LM}
\opn\LC{LC}
\opn\NF{NF}
\opn\Merge{Merge}
\opn\sgn{sgn}
\opn\suppPos{suppPos}
\opn\div{div} \opn\Div{Div} \opn\cl{cl} \opn\Pic{Pic}
\opn\Prin{Prin}
\opn\op{op}
\opn\indeg{indeg} \opn\outdeg{outdeg}
\opn\red{red}
\opn\Spec{Spec} \opn\Supp{Supp} \opn\supp{supp} \opn\Sing{Sing}
\opn\Ass{Ass} \opn\Min{Min}\opn\Mon{Mon}
\opn\Ann{Ann} \opn\Rad{Rad} \opn\Soc{Soc}
 \opn\Ker{Ker} \opn\Coker{Coker} \opn\Am{Am}
\opn\Hom{Hom} \opn\Tor{Tor} \opn\Ext{Ext} \opn\End{End}
\opn\Aut{Aut} \opn\id{id}
\opn\nat{nat}
\opn\pff{pf}
\opn\Pf{Pf} \opn\GL{GL} \opn\SL{SL} \opn\mod{mod} \opn\ord{ord}
\opn\Gin{Gin} \opn\Hilb{Hilb}\opn\sort{sort}
\opn\spn{span}
\opn\Image{Image}
\opn\aff{aff} \opn\con{conv} \opn\relint{relint} \opn\st{st}
\opn\lk{lk} \opn\cn{cn} \opn\core{core} \opn\vol{vol}
\opn\link{link} \opn\star{star}\opn\lex{lex}\opn\set{set}
\opn\dist{dist}
\opn\gr{gr}
\def\pot#1#2{#1[\kern-0.28ex[#2]\kern-0.28ex]}
\opn\dirlim{\underrightarrow{\lim}}
\opn\inivlim{\underleftarrow{\lim}}
\let\to=\rightarrow
\def\Implies{\ifmmode\Longrightarrow \else
        \unskip${}\Longrightarrow{}$\ignorespaces\fi}
\def\implies{\ifmmode\Rightarrow \else
        \unskip${}\Rightarrow{}$\ignorespaces\fi}
\def\iff{\ifmmode\Longleftrightarrow \else
        \unskip${}\Longleftrightarrow{}$\ignorespaces\fi}
\newtheorem{Theorem}{Theorem}[section]
\newtheorem{Lemma}[Theorem]{Lemma}
\newtheorem{Corollary}[Theorem]{Corollary}
\newtheorem{Proposition}[Theorem]{Proposition}
\theoremstyle{remark}
\newtheorem{Remark}[Theorem]{Remark}
\theoremstyle{definition}
\newtheorem{Example}[Theorem]{Example}
\newtheorem{Definition}[Theorem]{Definition}
\let\kappa=\varkappa
\def\qed{\ifhmode\textqed\fi
      \ifmmode\ifinner\quad\qedsymbol\else\dispqed\fi\fi}
\def\textqed{\unskip\nobreak\penalty50
       \hskip2em\hbox{}\nobreak\hfil\qedsymbol
       \parfillskip=0pt \finalhyphendemerits=0}
\def\dispqed{\rlap{\qquad\qedsymbol}}
\opn\dis{dis}
\def\pnt{{\raise0.5mm\hbox{\large\bf.}}}
\opn\Lex{Lex}
\opn\syz{{\rm syz}}
\opn\spoly{{\rm spoly}}
\opn\LM{{\rm LM}}
\opn\lm{{\rm lm}}
\opn\projdim{{\rm projdim}}
\opn\lcm{{\rm lcm}} \opn\A{\mathcal A}
\opn\prob{{\rm prob}}
\numberwithin{equation}{section}
\tikzstyle{Cwhite}=[scale = .6,circle, fill = white, minimum size=2.5mm]
\tikzstyle{Cgray}=[scale = .4,circle, fill = gray, minimum size=3mm]
\tikzstyle{Cblack2}=[scale = .4,circle, fill = black, minimum size=3mm]
\tikzstyle{Cblack}=[scale = .7,circle, fill = black, minimum size=3mm]
\tikzstyle{C0}=[scale = .9,circle, fill = black!0, inner sep = 0pt, minimum size=3mm]
\tikzstyle{C1}=[scale = .7,circle, fill = black!0, inner sep = 0pt, minimum size=3mm]
\tikzstyle{Cred}=[scale = .4,circle, fill = red, minimum size=3mm]
\tikzstyle{Cblue}=[scale = .4,circle, fill =blue, minimum size=3mm]
\begin{document}

\title{Polarization and depolarization of monomial ideals with application to multi-state system reliability}
\author{Fatemeh Mohammadi} 
\address{School of Mathematics, University of Bristol, UK}
\email{fatemeh.mohammadi@bristol.ac.uk}

\author{Patricia Pascual-Ortigosa} 
\address{Departamento de Matem\'aticas y Computaci\'on,  Universidad de La Rioja, Spain}
\email{papasco@unirioja.es}

\author{Eduardo S\'aenz-de-Cabez\'on}
\address{Departamento de Matem\'aticas y Computaci\'on,  Universidad de La Rioja, Spain}
\email{eduardo.saenz-de-cabezon@unirioja.es}

\author{Henry P. Wynn}
\address{Department of Statistics, London School of Economics, UK}
\email{h.wynn@lse.ac.uk}


\maketitle
\begin{abstract}
Polarization is a powerful technique in algebra which provides combinatorial tools to study algebraic invariants of monomial ideals. We study the reverse of this process, depolarization which leads to a family of ideals which share many common features with the original ideal. Given a squarefree monomial ideal, we describe a combinatorial method to obtain all its depolarizations, and we highlight their similar properties such as graded Betti numbers.  We show that even though they have many similar properties, their differences in dimension make them distinguishable in applications in system reliability theory. In particular, we apply polarization and depolarization tools to study the reliability of multi-state coherent systems via binary systems and vice versa. We use depolarization as a tool to reduce the dimension and the number of variables in coherent systems.

\end{abstract}
\section{Introduction}\label{sec:intro}
\subsection{Background.} Polarization is an operation that transforms a monomial ideal into a squarefree monomial ideal in a larger polynomial ring, preserving several important features of the original ideal such as graded Betti numbers. The main idea behind polarization is the possibility of using the combinatorial properties of squarefree monomial ideals when studying problems about general monomial ideals. Polarization is used in a wide variety of applications in the theory of monomial ideals. An important feature of polarization is that it was used by Hartshorne to prove the connectedness of the Hilbert scheme by showing that distractions of ideals can be described as specializations of polarizations of monomial ideals \cite{H66}. One of the main applications is its use to study the Cohen-Macaulay property of monomial ideals by passing to squarefree monomial ideals and applying Reisner's criterion on their associated simplicial complex \cite{M08,S83}. It is also used to study associated primes of monomial ideals and their powers \cite{HHQ15,HM10,MMV12}. 
\vspace{-.27cm}
\subsection{Our contribution.}  Even though polarization has been used as a powerful tool in algebraic geometry and in applications, the inverse operation, depolarization, has been less investigated.  Depolarization can be used to study the algebraic invariants of squarefree monomial ideals using general monomial ideals in less variables \cite{P06}. We note that depolarization is not unique, in the sense that a given squarefree monomial ideal might have different depolarizations. 

A main goal of this paper is to find all depolarizations of a given squarefree monomial ideal and describe their structure combinatorially. This is achieved in Section \ref{sec:depolarization}, which contains our main theoretical results. More precisely, for any squarefree monomial ideal $I$, we define the so-called depolarization orders, and we show that any such order gives rise to a depolarization of $I$, see Proposition \ref{prop:oneDepolarization}. Moreover, we show that any depolarization of $I$ can be constructed this way, see Theorem \ref{th:allDepolarizations}. As a given squarefree monomial ideal $I$ shares several important features with its depolarizations, the aforementioned combinatorial characterization can be used to select a convenient depolarization of $I$ in order to study the properties of either $I$ or any of its depolarizations. For example, one immediately obtains the Hilbert function of these ideals by studying only one of them, as they are closely related. It is also interesting to study the behaviour of other properties and features that are not shared within the family of depolarizations of $I$. We will use such dissimilarities to identify a particular depolarization whose invariants are easier to compute and provide information about all depolarizations of $I$. See, e.g., Proposition~\ref{prop:npaths}.
\vspace{-.27cm}

\subsection{Applications in system reliability theory} In applications it is sometimes convenient to work with squarefree monomial ideals, i.e, with polarizations of monomial ideals, and use all their features as combinatorial objects, as seen in \cite{M08, M2016}. However, on many occasions it makes sense to work on depolarizations of squarefree monomial ideals and reduce the number of variables of their corresponding rings. See, e.g.,  \cite{BM09, MS16} for similar considerations in different contexts. We propose to explore both directions in the context of algebraic analysis of the reliability of systems. In previous works \cite{GW04,MSW17,SW09,SW10,SW11,SW12,SW15,M16} the authors have studied the ideals associated to coherent systems and used their algebraic invariants such as Hilbert function and Betti numbers to compute the reliability of such systems. However, most of the work is devoted to binary systems whose associated ideals are squarefree. But, in practice many systems are non-binary, i.e., their components have multiple possible states, and hence their associated ideals are 
not squarefree. In this paper, we use polarization tools to study the reliability of a general multi-state system via binary systems, and we use depolarization as a dimension and variable reduction to study the reliability of  binary systems.
\vspace{-.25cm}

\subsection{Structure of the paper.} Section \ref{sec:preliminaries} gives the necessary preliminaries on polarization and depolarization. In Section \ref{sec:depolarization} we introduce the support posets as combinatorial tools to explore all depolarizations of a given squarefree monomial ideal. In Theorem~\ref{th:allDepolarizations} we show that every depolarization of a given monomial ideal $I$ can be obtained from its support poset. We describe the structure of depolarizations of $I$ in terms of its associated poset. This gives rise to a new bound for the projective dimension of monomial ideals, see Theorem \ref{th:pdBound}. Moreover, we describe several families of ideals for which there exists at least one quasi-stable ideal among their depolarizations. The algebraic invariants of quasi-stable ideals are easier to compute, see, e.g., \cite{S09b}. Therefore, we can compute the algebraic invariants of such ideals using their corresponding quasi-stable ideals.
In Section \ref{sec:systems} we turn to algebraic studies of reliability of networks and we describe how one can apply polarization and depolarization tools to compute the reliability of coherent multi-state systems. Finally, we give several examples applying the depolarization tools developed through this paper for dimensional reduction in coherent systems.

\section{Polarization and depolarization}\label{sec:preliminaries}
Throughout this paper, we will assume that $R=\kb[x_1,\dots,x_n]$ is a polynomial ring in $n$ indeterminates over a field $\kb$ on which we make no explicit assumptions. For any monomial ideal $I\subseteq R$, we let $G(I)=\{ m_1,\dots,m_r\}$ be the unique minimal monomial generating set of $I$.

\begin{Definition}\label{def:polarization}
Let $a=(a_1,\dots,a_n)$ and $\mu=(b_1,\dots,b_n)$ be two elements in $\NN^n$ with $b_i\leq a_i$ for all $i$. The polarization of $\mu$ in $\NN^{a_1+\cdots+a_n}$ is the multi-index $$\overline{\mu}=(\underbrace{1,\dots,1}_{b_1},\underbrace{0,\dots,0}_{a_1-b_1},\dots,\underbrace{1,\dots,1}_{b_n},\underbrace{0,\dots,0}_{a_n-b_n}).$$ The {\em polarization of} $\xb^\mu=x_1^{b_1}\cdots x_n^{b_n}\in R$ with respect to $a$ is the squarefree monomial $\xb^{\overline{\mu}}=x_{1,1}\cdots x_{1,b_1}\cdots x_{n,1}\cdots x_{n,b_n}$ in $S=\kb[x_{1,1},\dots,x_{1,a_1},\dots,x_{n,1},\dots,x_{n,a_n}]$. Note that for ease of notation we used $\xb$ with two different meanings in this definition.
Let $I=\langle m_1,\dots,m_r\rangle\subseteq R$ be a monomial ideal and let $a_i$ be the maximum exponent to which indeterminate $x_i$ appears among the generators of $I$.
The {\em polarization of $I$}, denoted by $I^P$, is the monomial ideal in $S$ given by $I^P=\langle \overline{m_1},\dots,\overline{m_r}\rangle$, where $\overline{m_i}$ is the polarization of $m_i$ with respect to $a$. 
\end{Definition}

Note that Definition \ref{def:polarization} is a combinatorial expression of the following result of Fr\"oberg \cite{F82} as given in \cite{V01} in which $I'$ is a polarization of $I$.

\begin{Proposition}\label{prop:Froberg}
For any monomial ideal $I\subset R$ there is a squarefree monomial ideal $I'\subset R'$ such that $R/I=R'/(I'+(\underline{h}))$, where $\underline{h}$ is a regular sequence on $R'/I'$ of forms of degree one.
\end{Proposition}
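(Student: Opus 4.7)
The plan is to take $I'$ to be the polarization $I^P$ defined in Definition~\ref{def:polarization}, and let $\underline{h}$ be the sequence of linear forms $h_{i,j} = x_{i,j} - x_{i,1}$ for $1 \leq i \leq n$ and $2 \leq j \leq a_i$, where $a_i$ is the maximum exponent of $x_i$ among the generators of $I$. Two things then need verification: the ring isomorphism $R/I \cong R'/(I' + (\underline{h}))$, and that $\underline{h}$ is a regular sequence on $R'/I'$.

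The first statement is straightforward. Quotienting $R' = S$ by $(\underline{h})$ identifies all variables $x_{i,1}, \ldots, x_{i,a_i}$ with a single representative, inducing a canonical isomorphism $S/(\underline{h}) \xrightarrow{\sim} R$ via $x_{i,j} \mapsto x_i$. Under this isomorphism each polarized generator $\overline{m}_k = x_{1,1}\cdots x_{1,b_1} \cdots x_{n,1}\cdots x_{n,b_n}$ is sent to $x_1^{b_1}\cdots x_n^{b_n} = m_k$, so the image of $I'$ is exactly $I$, establishing $R'/(I' + (\underline{h})) \cong R/I$.

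The substantive content lies in proving $\underline{h}$ is a regular sequence on $R'/I'$. I would argue by induction on $N = \sum_i(a_i - 1)$, polarizing one exponent at a time. At each step we pass from a partially polarized ideal $J_t$ to $J_{t+1}$ by introducing a fresh variable $y$ and replacing one factor of some old variable $x$ in every generator whose $x$-exponent is large enough. Polarization is designed so that every minimal generator of $J_{t+1}$ divisible by $y$ is also divisible by $x$; consequently, the inductive step reduces to the following key lemma: if $J$ is a monomial ideal in a polynomial ring $T$ containing variables $x$ and $y$ such that every minimal generator of $J$ divisible by $y$ is also divisible by $x$, then $y-x$ is a nonzerodivisor on $T/J$.

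The main obstacle is proving this key lemma. The direct approach is combinatorial bookkeeping on standard monomials: writing $f = \sum_\alpha c_\alpha m_\alpha$ in the $\kb$-basis of monomials not in $J$ and comparing the contributions of $y m_\alpha$ and $x m_\alpha$ to $f(y-x)$, the structural hypothesis allows one to cascade cancellations by $y$-degree and force each $c_\alpha$ to vanish. A more conceptual alternative I would also consider is to verify by induction on the polarization steps that $H_{R'/I'}(t) = H_{R/I}(t)/(1-t)^N$ (the Hilbert series identity can be checked combinatorially since at each polarization step one generator has its support shifted by one degree while one variable is added to the ambient ring), and then invoke the classical criterion that $N$ linear forms whose quotient realises the maximal possible reduction of Hilbert series must form a regular sequence, which immediately gives the result.
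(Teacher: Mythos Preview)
The paper does not supply its own proof of Proposition~\ref{prop:Froberg}; it is quoted from Fr\"oberg \cite{F82} via \cite{V01}. So there is nothing in the paper to compare against, and I can only assess your argument on its own merits.

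Your overall plan is the standard one and is correct in outline: take $I'=I^P$, let $\underline{h}$ consist of the differences $x_{i,j}-x_{i,1}$, verify the isomorphism after quotienting, and prove regularity one polarization step at a time. The first part is fine. The difficulty, as you say, is the regular-sequence assertion, and here there is a genuine gap.

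Your key lemma is false as stated. Take $T=\kb[x,y]$ and $J=\langle x^2y,\,x^3\rangle$. The only minimal generator divisible by $y$ is $x^2y$, which is divisible by $x$, so your hypothesis holds. Yet $(y-x)\cdot x^2=x^2y-x^3\in J$ while $x^2\notin J$, so $y-x$ is a zerodivisor on $T/J$. The same happens for $J=\langle xy,x^3\rangle$, so even adding the requirement that generators have $y$-degree at most one does not repair the statement. If you trace your ``cascade by $y$-degree'' sketch, what it actually delivers is a monomial $m\notin J$ with both $ym\in J$ and $xm\in J$; at that point the hypothesis ``generators divisible by $y$ are divisible by $x$'' gives no further purchase, and the argument stalls exactly as in the counterexample.

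What makes the polarization case work is a sharper structural fact you have not used: if $a$ is the maximal $x$-exponent among the generators of $J_t$ and $J_{t+1}$ is obtained by replacing each generator $g$ with $x^a\mid g$ by $(g/x)\,y$, then every generator of $J_{t+1}$ has $x$-degree at most $a-1$, while those containing $y$ have $x$-degree exactly $a-1$. From this one checks directly that $ym\in J_{t+1}$ and $xm\in J_{t+1}$ force $m\in J_{t+1}$, which is the missing implication. Equivalently, one shows that no associated prime of $J_{t+1}$ contains both $x$ and $y$, whence $y-x$ avoids every associated prime and is a nonzerodivisor.

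Your Hilbert-series alternative is sound in principle (the chain of inequalities $H_{A/(\ell_1,\dots,\ell_i)}(t)\ge(1-t)^i H_A(t)$ collapses to equalities once the endpoints agree, forcing each $\ell_i$ to be regular), but the claim $H_{S/I^P}(t)=H_{R/I}(t)/(1-t)^N$ is not established by the remark ``one generator has its support shifted by one degree while one variable is added''; proving it directly still requires essentially the standard-monomial bookkeeping you were trying to avoid.
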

Before defining the depolarization of ideals, we would like to note that as stated in Section~1, the polarization of monomial ideals have been extensively studied in the commutative algebra literature. Here we provide a thorough study of the reverse process, called depolarization,  to provide algebraic tools to compute the reliability of large networks by reducing the number of variables and the dimension of networks.
\begin{Definition}
Let $R, S$ and $T$ be polynomial rings over the field $\kb$. Let $I\subseteq R$ be a squarefree monomial ideal. A {\em depolarization of $I$} is a monomial ideal $J\subseteq S$ such that $I$ is isomorphic to $J^P\subseteq T$ that is:
There is a bijective map $\varphi$ from the set of variables of $R$ to the set of variables of $T$ such that $\varphi(G(I))=G(J^P)$, 
 where $G(J^P)$ is the unique minimal monomial generating set of $J^P$.
\end{Definition}
Note that the rings $R$ and $T$ above should have the same number of variables.

\begin{Example}\label{ex:depolarization}
Consider the squarefree monomial ideal $I=\langle xyz,xyt,yzt,ytu\rangle\subseteq R= \kb[x,y,z,t,u]$. The ideals $J=\langle ab^2,a^2b,abc,a^2c \rangle$ and $J'=\langle ab^2,abc,b^3,b^2c\rangle$ in $S=\kb[a,b,c]$ are two different depolarizations of $I$.

To check this observe that $J^P=\langle a_1b_1b_2,a_1a_2b_1,a_1b_1c_1,a_1a_2c_1\rangle\subseteq \kb[a_1,a_2,b_1,b_2,c_1]$ and we have an isomorphism between $I$ and $J^P$ via the correspondence $a_1\mapsto y$, $a_2\mapsto x$, $b_1\mapsto t$, $b_2\mapsto u$, $c_1\mapsto z$. On the other hand, $J'^P=\langle a_1b_1b_2,a_1b_1c_1,b_1b_2b_3,b_1b_2c_1\rangle\subseteq \kb[a_1,b_1,b_2,b_3,c_1]$ is isomorphic to $I$ by $a_1\mapsto x$, $b_1\mapsto y$, $b_2\mapsto t$, $b_3\mapsto u$, $c_1\mapsto z$.
\end{Example}

\begin{Remark}
As seen in Proposition~\ref{prop:Froberg}, depolarization is a combinatorial way to perform identification of variables arisen from a regular sequence of linear forms.  A natural question would be whether every such identification of variables can be read as a depolarization of the original ideal. In the following example, we show that this is not true in general.  Consider the following three ideals from \cite[Example~9.5]{MS16}: 
\begin{align*}
M&=\langle x_1^3,x_2^2,x_3^2,x_1^2x_2,x_2^2x_3,x_1x_2x_3\rangle\subseteq \kb[x_1,x_2,x_3] \\
\mathcal{M}&=\langle x_{12}x_{13}x_{14},x_{21}x_{24},x_{31}x_{34},x_{13}x_{14}x_{24},x_{12}x_{14}x_{34},x_{14}x_{24}x_{34}\rangle \\
&\subseteq \kb[x_{12},x_{13},x_{14},x_{21},x_{24},x_{31},x_{34}]\\
\mathcal{O}&=\langle x_{12}x_{13}x_{14},x_{12}x_{24},x_{13}x_{34},x_{13}x_{14}x_{24},x_{12}x_{14}x_{34},x_{14}x_{24}x_{34}\rangle \\
& \subseteq \kb[x_{12},x_{13},x_{14},x_{24},x_{34}].
\end{align*}
Both ideals $M$ and $\mathcal{O}$ can be obtained from $\mathcal{M}$ by identifying a set of variables together. More precisely, in $M$ we relabel every variable $x_{ij}$ with the variable $x_i$, and in $\mathcal{O}$ we identify the following sets of variables with each other $\{x_{13},x_{31}\}$ and $\{x_{12},x_{21}\}$.  Lemma~10.4 from \cite{MS16} implies that these identifications of variables arise from a regular sequence of linear forms. However, we note that neither of them is a depolarization of $\mathcal{M}$.
\end{Remark}

\section{Combinatorics of depolarization ideals}\label{sec:depolarization}
\subsection{The support poset}
Let $R=\kb[x_1,\dots,x_n]$ be a polynomial ring in $n$ variables. For any monomial $m$ of $R$ the {\em support} of $m$, denoted by $\supp(m)$, is defined as the set of indices of variables which divide $m$. The support of a monomial ideal $I\subseteq R$ is $\supp(I)=\bigcup_{m\in G(I)}\supp(m)$, where $G(I)$ is the unique minimal monomial generating set of $I$. We say that an ideal $I$ has {\em full support} if $\supp(I)=\{1,\dots,n\}=[n]$.  For ease of notation we assume that ideals have full support, unless otherwise stated.

Let $I$ be a squarefree monomial ideal with $G(I)=\{m_1,\dots,m_r\}$. 
For each $i$ in $\supp(I)$ we define the set $C_i\subseteq \supp(I)$ as,
\[
C_i=\{  j\,\vert \, j\in \bigcap_{m\in G(I)}\{\supp(m)\vert x_i \mbox{ divides } m\} \}.
\]
In other words, $C_i$ is given by the indices of all the variables that appear in every minimal generator of $I$ in which $x_i$ is present.
Let $C_I=\{C_1,\ldots,C_n\}$. The poset on the elements of $C_I$ ordered by inclusion is called the {\em support poset} of $I$ and is denoted $\suppPos(I)$.
We define the {\em support poset} of a general monomial ideal as the support poset of its polarization obtained from Definition~\ref{def:polarization}.

Given $n$ subsets $C_i$ of $[n]$, 
we form the poset $(\C=\{C_1,\dots,C_n\},\subseteq)$ on elements $C_i$ which are ordered by inclusion. Note that some $C_i$ can possibly be equal to $C_j$ for $i\neq j$.
A natural question is whether for such $(\C,\subseteq)$ we can construct a monomial ideal $I_\C$ whose support poset is $(\C,\subseteq)$.  This question is not easy in general. See Example~\ref{ex:support-posets} (2) for a counterexample. In the following proposition we provide a sufficient condition to construct such ideals.
Another sufficient condition will be given in Proposition~\ref{prop:npaths}.
\begin{Proposition}\label{prop:supportIdeals}
Let $(\C=\{C_1,\dots,C_n\},\subseteq)$ be a poset such that 
$\{i\}\subseteq C_i\subseteq [n]$ for each $i$, and if $k\in C_i$ and $i\in C_j$ then $k\in C_j$ for all $i,j,k$. 
Let $R=\kb[x_1,\dots,x_n]$ and let $m_i=\prod_{j\in C_i}x_j$ for each $i$. 
For any $\sigma\subseteq [n]$ let $m_\sigma=\lcm(m_i \vert i\in\sigma)$, and for any collection $\Sigma$ of subsets of $[n]$, consider the monomial ideal $I_\Sigma=\langle m_\sigma \vert \sigma\in\Sigma\rangle$.
Then $(\C,\subseteq)$ is the support poset of $I_\Sigma$ if the following properties hold:
\begin{enumerate}
\item $\forall i\in[n]$ there is some $\sigma \in\Sigma$ such that $x_i\vert m_\sigma$.
\item If $\{\sigma:\ x_i|m_\sigma\}\subseteq \{\sigma:\ x_j|m_\sigma\}$, then $C_j\subseteq C_i$.
\end{enumerate}
\end{Proposition}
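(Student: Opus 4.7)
Let $C'_i := \bigcap_{m \in G(I_\Sigma),\; x_i \mid m} \supp(m)$ denote the $i$-th element of the support poset of $I_\Sigma$; the task is to show $C'_i = C_i$ for every $i \in [n]$. Two preliminary facts will be used throughout. First, from $m_\sigma = \lcm(m_k : k \in \sigma)$ and $\supp(m_k) = C_k$ one obtains $\supp(m_\sigma) = \bigcup_{k \in \sigma} C_k$, so $x_i \mid m_\sigma$ iff some $k \in \sigma$ satisfies $i \in C_k$. Second, the transitivity-like hypothesis on $\C$, combined with $i \in C_i$, yields the convenient equivalence $j \in C_i \iff C_j \subseteq C_i$: the reverse implication is immediate from $j \in C_j$, while the forward implication follows by applying the hypothesis to each $k \in C_j$. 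Condition~(1) is what guarantees that the intersection defining $C'_i$ is nonempty, so that $C'_i$ is a well-defined element of the support poset of $I_\Sigma$.

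\textbf{The inclusion $C_i \subseteq C'_i$.} This direction uses only the transitivity-like hypothesis. Every minimal generator of $I_\Sigma$ equals $m_\sigma$ for some $\sigma \in \Sigma$. If $x_i \mid m_\sigma$, choose $k \in \sigma$ with $i \in C_k$; for any $j \in C_i$, the hypothesis applied to $j \in C_i$ and $i \in C_k$ gives $j \in C_k \subseteq \supp(m_\sigma)$. Hence $C_i \subseteq \supp(m_\sigma)$, and intersecting over all minimal generators divisible by $x_i$ yields $C_i \subseteq C'_i$.

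\textbf{The inclusion $C'_i \subseteq C_i$ and main obstacle.} I would argue contrapositively, making essential use of condition~(2). Assume $j \notin C_i$, so $C_j \not\subseteq C_i$ by the equivalence above. The contrapositive of~(2) then yields some $\sigma \in \Sigma$ with $x_i \mid m_\sigma$ and $x_j \nmid m_\sigma$; to witness $j \notin C'_i$ I must upgrade this to a \emph{minimal} generator with the same two properties. The plan is to choose $\tau_0 \in \Sigma$ so that $m_{\tau_0}$ is divisibility-minimal among all $m_\tau$ with $m_\tau \mid m_\sigma$ and $x_i \mid m_\tau$; any strict divisor in $G(I_\Sigma)$ that still carried $x_i$ would sit in this same collection and contradict the choice of $\tau_0$, so $m_{\tau_0}$ itself must lie in $G(I_\Sigma)$, and $m_{\tau_0} \mid m_\sigma$ automatically forces $x_j \nmid m_{\tau_0}$. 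The principal difficulty is precisely this bridging between the defining family $\{m_\sigma : \sigma \in \Sigma\}$, on which conditions (1) and~(2) are stated, and the minimal generating set $G(I_\Sigma)$ used to define $C'_i$, since the two can differ when some $m_\sigma$ is redundant. A clean way to handle this is to replace $\Sigma$ at the outset by $\Sigma^* := \{\sigma \in \Sigma : m_\sigma \in G(I_\Sigma)\}$ and verify that hypotheses (1) and~(2) descend to $\Sigma^*$; the divisibility-minimization argument then becomes essentially automatic.
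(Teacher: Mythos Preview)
Your proof of $C_i \subseteq C'_i$ is correct and matches the paper's argument. For the reverse inclusion, however, your minimality step has a gap. You claim $m_{\tau_0} \in G(I_\Sigma)$ because any strict divisor in $G(I_\Sigma)$ \emph{that still carried $x_i$} would contradict the choice of $\tau_0$; but a strict divisor $m_\rho \in G(I_\Sigma)$ with $x_i \nmid m_\rho$ yields no contradiction, yet still witnesses that $m_{\tau_0}$ is non-minimal. Concretely, take $n=4$, $C_1=\{1\}$, $C_2=\{2\}$, $C_3=\{1,2,3\}$, $C_4=\{4\}$ and $\Sigma=\{\{1\},\{2\},\{3\},\{4\}\}$: the poset hypothesis and conditions (1),(2) all hold, but $m_{\{3\}}=x_1x_2x_3$ is divisible by the minimal generator $m_{\{1\}}=x_1$, which does not carry $x_3$. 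Your $\Sigma^*$ alternative does not rescue this either: in that same example $x_3$ divides no element of $G(I_\Sigma)$, so hypothesis (1) fails to descend to $\Sigma^*$, and there is no reason (2) should descend in general since containment of $\{\sigma\in\Sigma^*: x_i\mid m_\sigma\}$ in $\{\sigma\in\Sigma^*: x_j\mid m_\sigma\}$ does not imply the corresponding containment over $\Sigma$.

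The paper's proof simply does not engage with this issue. It asserts at the outset that the support-poset element $D_j$ may be written as $\bigcap_{\sigma\in\Sigma,\ x_j\mid m_\sigma}\supp(m_\sigma)$, with the intersection ranging over all of $\Sigma$ rather than over $G(I_\Sigma)$. Granting that, condition (2) gives $D_j\subseteq C_j$ in one line: $k\in D_j$ says precisely that $\{\sigma: x_j\mid m_\sigma\}\subseteq\{\sigma: x_k\mid m_\sigma\}$, whence $C_k\subseteq C_j$ by (2) and so $k\in C_j$. Your ``principal difficulty'' is therefore real, but the paper's argument treats it as a non-issue, in effect reading the proposition under the tacit assumption that the $m_\sigma$ already form a minimal generating set (as is the case in all the worked examples).
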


\begin{proof}
Let $(\Dc=\lbrace D_1,\dots, D_n\rbrace, \subseteq)$ be the support poset of the ideal $I_\Sigma$. We want to show that $D_j=C_j$, for all $j\in\lbrace 1,\dots,n\rbrace$. Note that 
$
D_j=\{ k\, \vert \, k \in \bigcap_{x_j\vert m_\sigma\atop\sigma\in\Sigma}\supp(m_\sigma)\}.
$
Let $k\in C_j$. For any $\sigma$ with $x_j|m_\sigma$, there is some $\ell\in\sigma$ such that $x_j\vert m_\ell$. Hence, $j\in C_\ell$ which implies that $k\in C_\ell$ and so $x_k\vert m_\sigma$ and $k\in D_j$.

On the other hand, $k\in D_j$ implies that $x_k$ divides all $m_\sigma$, where $x_j\vert m_\sigma$. This together with condition $(2)$ imply that $C_k\subseteq C_j$ and $k\in C_j$ which means $D_j\subseteq C_j$.
%
%
%
%
\end{proof}

In the following, we show that some posets might not appear as support poset of any ideal, and on the other hand, several ideals might have the same support poset.

\begin{Example}\label{ex:support-posets}
\begin{enumerate}
\item Let 
$C_1=\{1,2\},\,C_2=\{2\},\, C_3=\{3\}, C_4=\{4\}$ and $C_5=\{4,5\}$. Let $\Sigma_1=\{\{1\},\{2,4\},\{3\},\{5\}\}$, $\Sigma_2=\{\{1\},\{2,3\},\{3,4\},\{5\}\}$ and $\Sigma_3=\{\{1,3\},\{3,5\},\{1,4\},\{2,5\}\}$. These three collections satisfy the conditions in Proposition \ref{prop:supportIdeals} and hence $(\C=\{C_1,\ldots,C_5\},\subseteq)$ is the support poset of the ideals $I_{\Sigma_1}=\langle x_1x_2,x_2x_4,x_3,x_4x_5\rangle$, $I_{\Sigma_2}=\langle x_1x_2,x_2x_3,x_3x_4,x_4x_5\rangle$ and $I_{\Sigma_3}=\langle x_1x_2x_3,x_3x_4x_5,x_1x_2x_4,x_2x_4x_5\rangle$. 
\item Let $\C$ be given by $C_1=\{1\},\, C_2=\{1,2\}$ and $C_3=\{1,2,3\}$, then there is no monomial ideal $I\subseteq R[x_1,x_2,x_3]$ such that $(\C,\subseteq)$ is the support poset of $I$. To see this, observe that $x_1x_2x_3$ must be one of the minimal generators of $I$, hence the only one, but $\C$ is not the support poset of $I=\langle x_1x_2x_3\rangle$.
\item Let $C_1=\{ 1,2,4\}$, $C_2=\{ 1,2,4\}$, $C_3=\{ 1,2,3,4\}$, $C_4=\{ 4\}$, $C_5=\{ 1,2,4,5,6\}$, $C_6=\{ 4,6\}$, $C_7=\{7\}$, $C_8=\{7,8\}$, $C_9=\{ 7,8,9\}$, $C_{10}=\{ 7,8,10\}$. Then for $\Sigma=\{\{3\},\{6,7\},\{5\},\{9\},\{10\}\}$, the ideal 
\[
I_\Sigma=\langle x_1x_2x_3x_4,x_4x_6x_7,x_1x_2x_4x_5x_6,x_7x_8x_9,x_7x_8x_{10}\rangle\subseteq \kb[x_1,\dots,x_{10}].
\] 
has $(\mathcal{C}=\{C_1,\dots,C_{10}\},\subseteq)$ as its support poset. 
\end{enumerate}
\end{Example}

\begin{Remark}
Note that in any support poset, $k\in C_i$ and $i\in C_j$ imply that $k \in C_j$. We can use this fact to visualize support posets using their Hasse diagrams, where each node is labelled by their elements which are not in any of the nodes below it.

\begin{figure}[h]
\centering
\begin{subfigure}[b]{0.48\textwidth}
\centering
      \begin{tikzpicture}[scale=.6, transform shape, vertices/.style={text width= 1.5em,align=center,draw=black, fill=white, circle, inner sep=1pt}]
             \node [vertices] (0) at (0,0){2};
             \node [vertices] (2) at (1.5,0){3};
             \node [vertices] (3) at (3,0){4};
             \node [vertices] (1) at (0,1.5){1};
             \node [vertices] (4) at (3,1.5){5};
              \foreach \to/\from in {0/1, 3/4}
     \draw [-] (\to)--(\from);
     \end{tikzpicture}
     \caption{Support poset for Example \ref{ex:support-posets} (1)}
     \label{fig:Hasse-support-a}
 \end{subfigure} 
\begin{subfigure}[b]{0.48\textwidth}
\centering
 \begin{tikzpicture}[scale=.6, transform shape, vertices/.style={text width= 1.5em,align=center,draw=black, fill=white, circle, inner sep=1pt}]
             \node [vertices] (0) at (3,0){4};
             \node [vertices] (1) at (7.5,0){7};
             \node [vertices] (2) at (1.5,1.5){1,2};
             \node [vertices] (3) at (4.5,1.5){6};
             \node [vertices] (4) at (7.5,1.5){8};
             \node [vertices] (5) at (0,3){3};
             \node [vertices] (6) at (3,3){5};
             \node [vertices] (7) at (6,3){9};
             \node [vertices] (8) at (9,3){10};
              \foreach \to/\from in {0/2, 0/3, 1/4, 2/5, 2/6, 3/6, 4/7,4/8}
     \draw [-] (\to)--(\from);		
     \end{tikzpicture}
     \caption{Support poset for Example \ref{ex:support-posets} (3)}
 \end{subfigure}
 \end{figure}
 \end{Remark}

The support poset of any monomial ideal $I\subseteq R=\kb[x_1,\dots,x_n]$, together with a given ordering $<$ on the variables $x_1,\dots, x_n$ induces a partial order $\prec$ on the set of variables as follows: $x_i\prec x_j$ if $C_i\subset C_j$ or if $C_i=C_j$ and $x_i < x_j$. We call this partial order the {\em $<$-support poset} of $I$ and denote it by $\suppPos_<(I)$. 
If $C_i\neq C_j$ for every pair of indices, then $\suppPos(I)$ is equal to the $<$-support poset of $I$ for any order $<$. 
See Figure~\ref{fig:Hasse-support-a} for Example \ref{ex:support-posets} (1). 

Note that, the Hasse diagram of $\suppPos_<(I)$ can be obtained from the Hasse diagram of $\suppPos(I)$ in which every node $C$ labelled with more than one index is substituted by a vertical line of nodes labelled by distinct elements of $C$, ordered by $<$. In other words, every $<$-support poset of $I$ is a refinement of $\suppPos(I)$. See Figure~\ref{fig:suppPosExa} as a $<$-support poset of $I_\Sigma$ in Example \ref{ex:support-posets} (3) for any order on the variables which is compatible with $x_1<x_2$.

\begin{figure}[h]
\centering
 \begin{tikzpicture}[scale=.6, transform shape, vertices/.style={text width= 1.5em,align=center,draw=black, fill=white, circle, inner sep=1pt}]
             \node [vertices] (0) at (3,0){4};
             \node [vertices] (1) at (8.5,0){7};
             \node [vertices] (2) at (1.5,1.5){1};
             \node [vertices] (2b) at (1.5,3){2};
             \node [vertices] (3) at (4.5,2.25){6};
             \node [vertices] (4) at (8.5,1.5){8};
             \node [vertices] (5) at (0,4.5){3};
             \node [vertices] (6) at (3,4.5){5};
             \node [vertices] (7) at (7,3){9};
             \node [vertices] (8) at (10,3){10};
              \foreach \to/\from in {0/2, 0/3, 1/4, 2/2b, 2b/5, 2b/6, 3/6, 4/7,4/8}
     \draw [-] (\to)--(\from);		
     \end{tikzpicture}
     \caption{$\suppPos_<(I)$ for Example \ref{ex:support-posets} (3) for any order with $x_1<x_2$}\label{fig:suppPosExa}
 \end{figure}
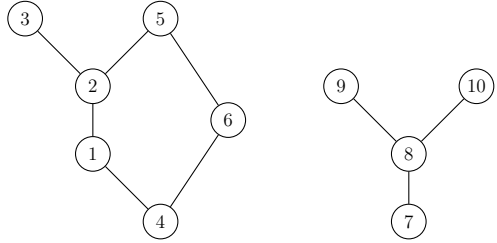
\subsection{Depolarization orders}

Recall that a subset $C$ of a poset $(\P,\prec)$ is a {\em chain} if any two elements of $C$ are comparable. We say that a chain $C$ of $(\P,\prec)$ is a {\em path} if there is no element $p\notin C$ such that $\min(C)\prec p \prec \max(C)$ and $p$ is comparable to every element in $C$. In other words, a path is a chain with no gaps, i.e., an interval within a chain. An antichain is a set of pairwise incomparable elements in $(\P,\prec)$.

\begin{Definition}
Given an order $<$ on the variables of $R$, a {\em depolarization order} of a squarefree monomial ideal $I\subseteq R$ is a partition of 
$\suppPos_<(I)$ into disjoint paths.
\end{Definition}

We now show that depolarization orders characterize all depolarizations of $I$. Namely, every depolarization order gives rise to a depolarization of $I$, and every depolarization of $I$ can be realized as a depolarization obtained by such an order.

\begin{Proposition}\label{prop:oneDepolarization}
Using any depolarization order of a squarefree monomial ideal $I$, we can construct a depolarization of $I$. 
\end{Proposition}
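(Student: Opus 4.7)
The plan is to associate to a given depolarization order $\mathcal{P}=\{P_1,\dots,P_s\}$ of $I$ a polynomial ring $S=\kb[y_{P_1},\dots,y_{P_s}]$ with one variable per path, and a monomial ideal $J\subseteq S$ obtained by collapsing the initial segment of each path that appears in each minimal generator into an appropriate power of that path's variable. Concretely, for $m\in G(I)$ and a path $P=\{x_{i_1}\prec\cdots\prec x_{i_k}\}$, I would set $e_P(m):=|P\cap\supp(m)|$ and $\varphi(m):=\prod_P y_P^{e_P(m)}$, and take $J:=\langle \varphi(m):m\in G(I)\rangle$ as the candidate depolarization.

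The essential combinatorial input is the following \emph{initial segment property}: for every path $P$ and every $m\in G(I)$, the set $P\cap\supp(m)$ is an initial segment of $P$, namely $\{x_{i_1},\dots,x_{i_{e_P(m)}}\}$. This is where the depolarization-order structure enters: from $x_{i_a}\prec x_{i_b}$ with $a<b$ one reads off $C_{i_a}\subseteq C_{i_b}$, so $i_a\in C_{i_a}\subseteq C_{i_b}$, and hence $x_{i_b}\mid m$ forces $x_{i_a}\mid m$. This justifies the notation $e_P(m)$ as a ``length'' and makes $\varphi$ well defined.

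With this in hand, the verification breaks into three focused checks. First, $\{\varphi(m)\}_{m\in G(I)}$ is an antichain under divisibility and $\varphi$ is injective, because $\varphi(m)\mid\varphi(m')$ is equivalent to $e_P(m)\le e_P(m')$ for every $P$, which by the initial segment property is equivalent to $\supp(m)\subseteq\supp(m')$, i.e.\ $m\mid m'$; hence the $\varphi(m)$'s are exactly the minimal generators of $J$. Second, the full-support assumption together with the initial segment property forces $\max_m e_P(m)=|P|$, so polarizing $J$ produces exactly $|P|$ new variables $y_{P,1},\dots,y_{P,|P|}$ for each $P$, giving $\sum_P|P|=n$ variables in total, matching the variables of $R$. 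Third, under the variable bijection $y_{P,j}\leftrightarrow x_{i_j}$ the polarization $\varphi(m)^P=\prod_P y_{P,1}\cdots y_{P,e_P(m)}$ is identified with $\prod_P x_{i_1}\cdots x_{i_{e_P(m)}}=m$, so $\varphi(G(I))=G(J^P)$ as required.

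The main obstacle is isolating and establishing the initial segment property: once it is in hand the rest is exponent bookkeeping, but without it neither the definition of $\varphi$ nor the variable count $a_P=|P|$ makes sense. The argument hinges on correctly interpreting the $<$-support poset and on the observation that $i\in C_i$ always, which is what converts the abstract relation $C_{i_a}\subseteq C_{i_b}$ into the concrete divisibility statement needed to align $J^P$ with $I$.
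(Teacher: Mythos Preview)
Your argument is correct and follows the same construction as the paper: one variable $y_j$ per path $\sigma_j$, with each $x_i\mapsto y_j$ for $i\in\sigma_j$, and the inverse identification $y_{j,\ell}\mapsto x_{\sigma_{j,\ell}}$ after polarizing. The paper's proof simply asserts that $J^P$ is ``clearly equivalent'' to $I$ under this map; you have unpacked precisely what makes this clear, namely the \emph{initial segment property} (if $x_{i_a}\prec x_{i_b}$ lie in the same path and $x_{i_b}\mid m$, then $x_{i_a}\mid m$, because $i_a\in C_{i_a}\subseteq C_{i_b}$). This is exactly the content behind the paper's one-line justification, and your additional checks---that the images $\varphi(m)$ form an antichain, that $\max_m e_P(m)=|P|$ so the variable count matches, and that the bijection carries $G(J^P)$ to $G(I)$---are the details a careful reader would supply. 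A minor observation: your argument uses only that each $\sigma_j$ is a chain in $\suppPos_<(I)$, not the stronger ``no gaps'' condition defining a path; this does not affect correctness here, since the proposition only asks for a construction from a depolarization order, not a characterization.
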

\begin{proof}
Let $({\P},\prec)$ be a depolarization order for a squarefree monomial ideal $I\subseteq R=\kb[x_1,\dots,x_n]$, where ${\P}=\{\sigma_1,\dots,\sigma_k\}$ and each $\sigma_i$ is a path in $\suppPos_<(I)$ for a given order $<$ on the variables of $R$. We construct a depolarization $J$ of $I$ in a polynomial ring $S=\kb[y_1,\dots,y_k]$ as follows: for each monomial $m$ in $G(I)$ consider the monomial $m'$ given by the image of $m$ under the correspondence $x_i\mapsto y_j$ for each $i\in\sigma_j$. The monomials $m'$ generate an ideal $J$ whose polarization $J^P$ is clearly equivalent to $I$ via the map sending $y_{j,\ell}\mapsto x_{{\sigma_j}_\ell}$ where ${\sigma_j}_\ell$ is the $\ell$-th element of $\sigma_j$ under the order $\prec$.
\end{proof}

\begin{Example}\label{ex:depolarizationOrder}
The partition given by $\P=\{\{4,2,1,3\}, \{6,5\}, \{7,8,9\}, \{10\}\}$ is a depolarization order for the ideal $I$ in Example \ref{ex:support-posets} (3) for any ordering in which $x_2< x_1$. Figure \ref{fig:partitionPoset} shows this partition.
\begin{figure}[h]
\centering
\begin{tikzpicture}[scale=.8, transform shape, vertices/.style={text width= 1.5em,align=center,draw=black, fill=white, circle, inner sep=1pt}]
             \node [vertices] (0) at (3,0){4};
             \node [vertices] (1) at (7.5,0){7};
             \node [vertices] (2) at (1.5,1.5){2};
             \node [vertices] (2b) at (1.5,3){1};
             \node [vertices] (3) at (4.5,2.25){6};
             \node [vertices] (4) at (7.5,1.5){8};
             \node [vertices] (5) at (0,4.5){3};
             \node [vertices] (6) at (3,4.5){5};
             \node [vertices] (7) at (6,3){9};
             \node [vertices] (8) at (9,3){10};
              \foreach \to/\from in {0/2, 0/3, 1/4, 2/2b, 2b/5, 2b/6, 3/6, 4/7,4/8}
     \draw [-] (\to)--(\from);
     
     \draw[name path=ellipse,red,thick, rotate around={-55:(1.5,2.5)}]
		(1.5,2.5) circle[x radius = 4 cm, y radius = 1.1 cm];
     \draw[name path=ellipse,red,thick, rotate around={-55:(3.75,3.25)}]
		(3.75,3.25) circle[x radius = 3 cm, y radius = .8 cm];
     \draw[name path=ellipse,red,thick, rotate around={-60:(7,1.5)}]
		(7,1.5) circle[x radius = 2.5 cm, y radius = 1.1 cm];
      \draw[name path=ellipse,red,thick]
		(9,3) circle[x radius = .8 cm, y radius = .8 cm];
     \end{tikzpicture}
\caption{A path partition of $\suppPos_<(I)$ in Example \ref{ex:support-posets} (3) gives a depolarization order $(\P,\prec)$ for $I$.}
\label{fig:partitionPoset}
\end{figure}
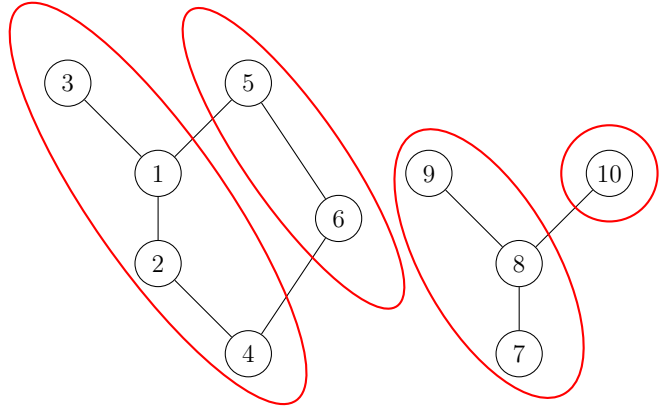

The depolarization order $(\P,\prec)$ depicted in Figure \ref{fig:partitionPoset} gives the depolarization $J=\langle y_1^4,y_1y_2y_3,y_1^3y_2^2,y_3^3,y_3^2y_4 \rangle\subseteq \kb[y_1,y_2,y_3,y_4]$ of $I$. The equivalence between $J^P$ and $I$ is given by $y_{1,1}\mapsto x_4$, $y_{1,2}\mapsto x_2$, $y_{1,3}\mapsto x_1$, $y_{1,4}\mapsto x_3$, $y_{2,1}\mapsto x_6$, $y_{2,2}\mapsto x_5$, $y_{3,1}\mapsto x_7$, $y_{3,2}\mapsto x_8$, $y_{3,3}\mapsto x_9$, $y_{4,1}\mapsto x_{10}$.
\end{Example}

We have just seen that every depolarization order of a squarefree monomial ideal $I$ gives a depolarization of $I$. Now, we study the reverse of this process and we show that given any depolarization $J$ of $I$ we can explicitly find a depolarization order from which we can reconstruct $J$.

\begin{Theorem}\label{th:allDepolarizations}
Let  $I=\langle m_1,\dots,m_r\rangle\subseteq R=\kb[x_1,\dots,x_n]$ be a squarefree monomial ideal. Every depolarization of $I$ can be obtained from a depolarization order of $I$.
\end{Theorem}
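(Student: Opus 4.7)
\emph{Plan.} Given a depolarization $J\subseteq S=\kb[y_1,\dots,y_k]$ of $I$ together with a variable bijection $\varphi$, I will read off a partition of $[n]$ and a compatible total order $<$ on $\{x_1,\dots,x_n\}$ such that the resulting $(\P,\prec)$ is a depolarization order of $I$ which, via the construction of Proposition~\ref{prop:oneDepolarization}, returns $J$.

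\emph{Candidate construction.} For each $j$, let $a_j$ be the maximum exponent of $y_j$ among the generators of $J$ and set $\sigma_j=\{i:\varphi^{-1}(x_i)\in\{y_{j,1},\dots,y_{j,a_j}\}\}$. Since $\varphi$ is a bijection of variables, the blocks $\sigma_1,\dots,\sigma_k$ partition $[n]$, and each $\sigma_j$ inherits a linear order $<_j$ from the polarization index $\ell$. I take $<$ to be any total order on $\{x_1,\dots,x_n\}$ that refines every $<_j$ and lists the variables of each $\sigma_j$ consecutively. Running the construction of Proposition~\ref{prop:oneDepolarization} on $(\P,\prec)$ then sends each $m\in G(I)$ to $\prod_j y_j^{|\{i\in\sigma_j\,:\,x_i\mid m\}|}$, which by the definition of $\sigma_j$ is exactly the generator of $J$ corresponding to $m$ under $\varphi$. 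Hence it suffices to show that $(\P,\prec)$ is indeed a depolarization order.

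\emph{Each $\sigma_j$ is a chain.} Transporting to $\suppPos(J^P)$ via $\varphi$, the polarized generators of $J$ divisible by $y_{j,\ell+1}$ form a subset of those divisible by $y_{j,\ell}$ (they must satisfy $\deg_{y_j}(m)\ge\ell+1$ instead of $\ge\ell$). Intersecting supports yields $C_{y_{j,\ell}}\subseteq C_{y_{j,\ell+1}}$; pulling back through $\varphi$ and using the tie-break coming from $<_j\subset<$, each $\sigma_j$ becomes a chain in $\suppPos_<(I)$.

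\emph{Path property (the main obstacle).} The core of the argument is to rule out an ``intruder'' $p=x_k\notin\sigma_j$ with $\min\sigma_j\prec p\prec\max\sigma_j$ that is comparable to every element of $\sigma_j$. Writing $\varphi^{-1}(x_k)=y_{j',\ell'}$ with $j'\neq j$ and using the explicit description $C_{y_{j,\ell}}=\bigcup_{j''}\{y_{j'',1},\dots,y_{j'',b_{j,\ell,j''}}\}$, where $b_{j,\ell,j''}=\min\{\deg_{y_{j''}}(m):\deg_{y_j}(m)\ge\ell\}$, the hypothesis forces $C_{y_{j,\ell_0}}\subsetneq C_{y_{j',\ell'}}\subsetneq C_{y_{j,\ell_0+1}}$ for some $\ell_0$ (the tie-break sub-case is handled by the contiguous choice of $<$). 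A coordinate-by-coordinate comparison of the $b$-values then yields a contradiction with the polarization combinatorics. I expect this case analysis, together with the choice of $<$ needed to keep variables with a common $C$-value but lying in different blocks from interleaving, to be the principal technical hurdle.
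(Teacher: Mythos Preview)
Your construction of the partition $\P=\{\sigma_1,\dots,\sigma_k\}$ together with the block-wise order is exactly what the paper does; its proof is only a few lines and simply \emph{asserts} that the resulting $(\P,<)$ is a depolarization order, without verifying either the chain or the path condition. So in terms of strategy you are aligned with the paper, and you have in fact gone further by correctly isolating the path property as the substantive point.

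The difficulty is that the case analysis you sketch for the path property cannot be completed, because the conclusion you are aiming at can fail. Take
\[
I=\langle abc,\,abd,\,ae\rangle\subseteq\kb[a,b,c,d,e].
\]
Here $C_a=\{a\}\subsetneq C_b=\{a,b\}\subsetneq C_c=\{a,b,c\}$, so for every choice of $<$ the $<$-support poset has $a\prec b\prec c$. The ideal $J=\langle y_1^2y_2,\,y_1y_2y_3,\,y_1y_4\rangle$ is a depolarization of $I$ via $a\mapsto y_{1,1}$, $c\mapsto y_{1,2}$, $b\mapsto y_{2,1}$, $d\mapsto y_{3,1}$, $e\mapsto y_{4,1}$, and the induced partition is $\{\{a,c\},\{b\},\{d\},\{e\}\}$. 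The block $\{a,c\}$ is a chain but \emph{not} a path, since $b$ sits strictly between $a$ and $c$. Moreover, comparing generators shows that any partition yielding $J$ must send $\{a,c\}$ to the unique variable of exponent~$2$ (the two-element block must contain $c$ and exactly one of $a,b$ from $abc\mapsto y_1^2y_2$, and must meet $\{a,e\}$ from $ae\mapsto y_1y_4$), so no path partition of $\suppPos_<(I)$ produces $J$ at all. Thus the ``intruder'' scenario you flagged as the principal hurdle is not merely technical: it genuinely occurs, and your contradiction argument from the $b$-values cannot close. What your chain verification (and the paper's intended argument) does establish is that each $\sigma_j$ is a \emph{chain} in $\suppPos_<(I)$; this is already enough for the reconstruction in Proposition~\ref{prop:oneDepolarization} to invert correctly, so the result holds with ``path'' weakened to ``chain''.
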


\begin{proof}
Let $J\subseteq S=\kb[y_1,\dots,y_k]$ be a depolarization of the ideal $I$ and let $J^P\subseteq T=\kb[y_{1,1},\dots,y_{1,j_1},\dots,y_{k,1},\dots,y_{k,j_k}]$ be the polarization of $J$. Since $J$ is a depolarization of $I$, we know that $R$ and $T$ have the same number of variables and that $I$ and $J^P$ are equivalent under a map sending $x_i$ to $y_{a,b}$ for some $a\in\{1,\dots,k\}$ and $b\in\{1,\dots,j_a\}$. Now consider in $\{1,\dots,n\}$ the partition $\P$ with $k$ subsets in which $\sigma_i$ contains all $j$ such that $x_j$ corresponds to some $y_{i,b}$ with the total order given by $j<j'$ if $b<b'$, where $ y_{i,b}\mapsto x_j$ and $y_{i,b'}\mapsto x_{j'}$. Then $(\P,<)$ is a depolarization order for $I$ that produces the depolarization $J$.
\end{proof}

\begin{Example}
Consider the depolarization $J=\langle ab^2,a^2b,abc,a^2c\rangle$ of the ideal $I=\langle xyz,xyt,yzt,ytu\rangle$ in Example \ref{ex:depolarization}. We have that $J^P\subseteq\kb[a_1,a_2,b_1,b_2,c_1]$ is equivalent to $I\subseteq\kb[x,y,z,t,u]$ through the correspondence $a_1\mapsto y$, $a_2\mapsto x$, $b_1\mapsto t$, $b_2\mapsto u$, $c_1\mapsto z$. The corresponding depolarization order is $\P=\{ \{y,x\},\{t,u\},\{z\}\}$ where the elements in the  sets are given in increasing order.
\end{Example}

\subsection{Depolarization posets}
Let $P$ and $P'$ be two path partitions of a given poset. We say that $P$ is a {\em refinement} of $P'$ if  for every path $C$ in $P$ there is a path $C'$ in $P'$ such that {$C'\subseteq C$}. The set of all path partitions of a given poset are sorted by refinement and using this ordering they form themselves a poset. Let $I$ be a squarefree monomial ideal and let $J$, $J'$ be two depolarizations of $I$. We say that $J\leq J'$ if the path partition giving rise to $J$ is a refinement of the one corresponding to $J'$. Using this ordering, a collection of ideals that are depolarizations of a given squarefree monomial ideal $I$ forms a poset in which $I$ is the unique minimal element. We call this the {\em depolarization poset} of $I$, denoted $\DP(I)$. Given any monomial ideal $J$ (not necessarily squarefree), we define its {\em depolarization poset} to be the depolarization poset of its polarization $J^P$. In other words, $\DP(J):=\DP(J^P)$.

Every depolarization poset has a unique minimal element which is a squarefree monomial ideal, hence $\DP(J)$ is a meet-semilattice for every monomial ideal $J$, that is for every pair $K$ and $K'$ in $\DP(J)$ there is an element in $\DP(J)$, denoted by $K\land K'$, which is smaller than both of them. On the other hand, $\DP(J)$ might have several maximal elements and therefore it is not a lattice in general. We say that an ideal $J\subseteq \kb[x_1,\dots,x_n]$ is a {\em maximum} element in its depolarization poset if there is no other ideal $J'\subseteq \kb[x_1,\dots,x_m]$ in $\DP(J)$ such that $m<n$. That means the ambient ring of $J$ has the minimal number of variables among the ambient rings of all ideals in $\DP(J)$.


\subsection{Copolar ideals} 
\begin{Definition}
Two monomial ideals $I$ and $J$ are called {\em copolar} if their polarizations are equivalent, i.e., they are in the same depolarization poset.
\end{Definition}
Copolarity is an equivalence relation in the set of monomial ideals. We say that a property of an ideal is {\em copolar} if it is shared by all ideals in the same polarity class. The following proposition gives a list of copolar properties.

\begin{Proposition}[Corollary 1.6.3 in \cite{HH10}]\label{prop:copolarProperties}
Let $I\subseteq S$ be a monomial ideal and let $J\subseteq T$ be its polarization. Then
\begin{enumerate}
\item $\beta_{i,j}(I)=\beta_{i,j}(J)$ for all $i$ and $j$
\item $H_{I}(t)=(1-t)^\delta H_{J}(t)$ where $\delta=\dim T-\dim S$
\item $\height(I)=\height(J)$
\item $\projdim(S/I)=\projdim(T/J)$ and $\reg(S/I)=\reg(T/J)$
\item $S/I$ is Cohen-Macaulay (resp. Gorenstein) if and only if $T/J$ is Cohen-Macaulay (resp. Gorenstein).
\end{enumerate}
\end{Proposition}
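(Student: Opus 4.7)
The plan is to reduce every assertion to the single structural input supplied by Proposition~\ref{prop:Froberg}: for the polarization $J\subseteq T$ of $I\subseteq S$, there is a regular sequence $\underline{h}=h_1,\dots,h_\delta$ of linear forms on $T/J$, with $\delta=\dim T-\dim S$, such that $(T/J)/(\underline{h})(T/J)\cong S/I$ after identifying $S$ with $T/(\underline{h})$. Each $h_i$ has the form $y_{a,b}-y_{a,c}$ and so sits in the homogeneous maximal ideal of $T$. All the copolar invariants listed then fall out by standard facts about quotients of graded modules by a regular sequence of degree-one non-zero-divisors.

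First I would treat (1) and (2) together. Take a minimal graded free resolution $F_\bullet$ of $T/J$ over $T$. Since $\underline{h}$ is $T/J$-regular, the Koszul-complex argument gives $\Tor_i^T(T/J,T/(\underline{h}))=0$ for $i>0$; hence $F_\bullet\otimes_T T/(\underline{h})$ is a graded free resolution of $S/I$ over $S$. Minimality persists because the entries of the differentials of $F_\bullet$ lie in the graded maximal ideal of $T$ and so their images lie in the graded maximal ideal of $S$. This yields $\beta_{i,j}(S/I)=\beta_{i,j}(T/J)$ for all $i,j$, hence also $\beta_{i,j}(I)=\beta_{i,j}(J)$ after the homological shift by one, proving~(1). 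For (2), quotienting by a single degree-one non-zero-divisor multiplies the Hilbert series by $(1-t)$; iterating $\delta$ times gives $H_{S/I}(t)=(1-t)^\delta H_{T/J}(t)$. Combining with $H_S(t)=(1-t)^\delta H_T(t)$ and the identities $H_I=H_S-H_{S/I}$, $H_J=H_T-H_{T/J}$ gives $H_I(t)=(1-t)^\delta H_J(t)$.

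Next (3) and (4) are immediate consequences. Since $\projdim$ and $\reg$ can be read directly from the graded Betti table, part~(1) delivers $\projdim(S/I)=\projdim(T/J)$ and $\reg(S/I)=\reg(T/J)$. For heights, $\underline{h}$ being a $T/J$-regular sequence of length $\delta$ gives $\dim(S/I)=\dim(T/J)-\delta$; subtracting from $\dim S=\dim T-\delta$ yields $\height(I)=\height(J)$. For (5), the Cohen--Macaulay and Gorenstein properties are both preserved and reflected under quotient of a graded module by a regular sequence of linear forms: the Auslander--Buchsbaum formula transports Cohen--Macaulayness through the equality $\depth(T/J)-\delta=\depth(S/I)$ combined with the dimension equality just obtained, and for Gorensteinness one uses in addition that the canonical module behaves well under such quotients (equivalently, the last Betti number and its shift are preserved by~(1)).

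The main obstacle I anticipate is the bookkeeping of \emph{minimality} and of the internal grading after tensoring with $T/(\underline{h})$: one must verify that the variables $y_{a,b}$ of $T$ and the variables $x_i$ of $S$ can be aligned so that the linear forms $h_i$ truly lie in the graded maximal ideal and carry degree one, so that no Betti number collapses or splits across degrees. Once this verification is in place, all five statements follow from the homological and Hilbert-series arithmetic above, which is precisely the content of~\cite[Corollary~1.6.3]{HH10}.
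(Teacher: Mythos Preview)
The paper does not supply its own proof of this proposition; it is quoted verbatim from \cite[Corollary~1.6.3]{HH10} and left unproved. Your argument is correct and is essentially the standard one found in that reference: everything is deduced from Fr\"oberg's description (Proposition~\ref{prop:Froberg}) of polarization as passage modulo a regular sequence of degree-one forms, after which Betti numbers, Hilbert series, height, projective dimension, regularity, and the Cohen--Macaulay/Gorenstein properties all transfer by the usual homological bookkeeping. The ``obstacle'' you flag is not a genuine one: the forms $h_i$ are differences $x_{a,b}-x_{a,1}$ of variables, hence automatically homogeneous of degree one and in the graded maximal ideal, so minimality and the grading survive without further work.
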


A reason behind the items in Proposition \ref{prop:copolarProperties} is that the {\em lcm-lattice} \cite{GPW99,MS04} of $I$, denoted by $\lcm(I)$, is isomorphic to the {\em lcm-lattice} of $J$ under the map taking $\lcm(m,m')$ to $\lcm(\overline{m},\overline{m'})$ for every pair of monomials in $G(I)$.

\begin{Lemma}
Let $I$ and $J$ be two copolar ideals. Then $\lcm(I)\cong \lcm(J)$.
\end{Lemma}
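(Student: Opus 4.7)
The plan is to show that for any monomial ideal $I$ and its polarization $I^P$ the lcm-lattices are canonically isomorphic, and then to chain two such isomorphisms through the common polarization of the copolar pair.

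First, I would fix the notation and recall that polarization induces a bijection $G(I)\to G(I^P)$ given by $m\mapsto \overline{m}$. The key combinatorial fact to verify is that polarization commutes with lcm on finite subsets of $G(I)$: if $m=x_1^{b_1}\cdots x_n^{b_n}$ and $m'=x_1^{b'_1}\cdots x_n^{b'_n}$, then $\lcm(m,m')=x_1^{\max(b_i,b'_i)}\cdots$, whose polarization reads off, in each block of variables $x_{i,1},\dots,x_{i,a_i}$, exactly the first $\max(b_i,b'_i)$ variables, which is precisely $\lcm(\overline{m},\overline{m'})$. This extends immediately by induction to any finite subset $\sigma\subseteq G(I)$, yielding $\overline{\lcm(\sigma)}=\lcm(\{\overline{m}:m\in\sigma\})$.

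Next I define the map $\Phi\colon \lcm(I)\to \lcm(I^P)$ by sending an element $\lcm(\sigma)$ of the lcm-lattice of $I$ to $\lcm(\{\overline{m}:m\in\sigma\})$. The computation above shows that $\Phi$ is well-defined (the value depends only on $\lcm(\sigma)$, not on the chosen representation) and that it is injective, since the polarization of a monomial determines the monomial uniquely. It is surjective by construction. Finally, $\Phi$ preserves the divisibility order and meets, because $\lcm$ in both lattices is the join operation and $\Phi$ commutes with $\lcm$; hence $\Phi$ is a lattice isomorphism $\lcm(I)\cong \lcm(I^P)$.

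To conclude, the hypothesis that $I$ and $J$ are copolar means by definition that $I^P$ and $J^P$ agree up to the bijection $\varphi$ of variables in the definition of depolarization. Renaming variables via $\varphi$ is an isomorphism of polynomial rings that carries $G(I^P)$ to $G(J^P)$ and commutes with $\lcm$, so it induces a lattice isomorphism $\lcm(I^P)\cong \lcm(J^P)$. Composing with the two isomorphisms provided by the previous paragraph gives the desired chain
\[
\lcm(I)\;\cong\;\lcm(I^P)\;\cong\;\lcm(J^P)\;\cong\;\lcm(J).
\]
The only nontrivial step is the commutation of polarization with lcm, which is the elementary block-by-block calculation above; everything else is formal.
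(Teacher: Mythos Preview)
Your proof is correct and follows the same idea the paper indicates: the paper does not give a formal proof of this Lemma but simply points out (in the sentence preceding it) that the isomorphism is the map sending $\lcm(m,m')$ to $\lcm(\overline{m},\overline{m'})$. Your argument is exactly a fleshed-out version of this, verifying that polarization commutes with $\lcm$ block by block and then chaining through the common polarization, so there is nothing to add.
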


The $\lcm$-lattice of a monomial ideal encodes the structure of its minimal free resolution and thus its Betti numbers \cite{GPW99}. Some other important invariants are also fixed under polarization. One recent remarkable result in this direction is given in \cite{IKM17} where the authors proved that the Stanley conjecture can be reduced to the squarefree case via polarization, and that the Stanley projective dimension is invariant under polarization (in particular, two ideals with isomorphic $\lcm$-lattice have the same Stanley projective dimension).

The fact that several important properties like Betti numbers or the Cohen-Macaulay property are copolar is one of the main reasons to study polarizations of ideals.  A motivation for studying depolarization and ideals inside the same polarity class is to find some particular ideal in the class that can provide information about the rest of the ideals in the class.  

We first take advantage of the fact that the number of variables of the ambient ring is not constant within the same polarity class, but the projective dimension is.  Therefore, for any monomial ideal we can construct its depolarization poset and find the maximum elements whose ambient rings has the minimum number of variables. Since the number of variables of a polynomial ring is an upper bound for the projective dimension of its ideals, this procedure provides us with an upper bound for the projective dimension of the ideals in terms of their depolarization posets. Recall that the {\em width} of a poset is the maximum size of its antichains.

\begin{Theorem}\label{th:pdBound}
The  width of $\suppPos(I^P)$ is an upper bound for $\projdim(I)$.
\end{Theorem}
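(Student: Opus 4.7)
The plan is to bound $\projdim(I)$ by producing a depolarization of $I^P$ whose ambient polynomial ring uses as few variables as possible, and then invoking the Hilbert syzygy theorem together with the copolarity invariance of projective dimension. By Proposition~\ref{prop:copolarProperties}(4), $\projdim(I) = \projdim(I^P)$, and for any depolarization $J \subseteq S$ of $I^P$ the same proposition gives $\projdim(J) = \projdim(I^P)$. Since $\projdim(J) \le \dim S$ by the Hilbert syzygy theorem, it suffices to exhibit a depolarization of $I^P$ whose ambient ring has at most $w := \mathrm{width}(\suppPos(I^P))$ variables.

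By Theorem~\ref{th:allDepolarizations} together with Proposition~\ref{prop:oneDepolarization}, this amounts to partitioning $\suppPos_<(I^P)$ into at most $w$ paths for some linear order $<$ on the variables, since the number of paths in the depolarization order equals the number of variables of the resulting depolarization. The widths of $\suppPos(I^P)$ and $\suppPos_<(I^P)$ coincide: any two variables with equal $C_i$ become comparable in $\suppPos_<(I^P)$, so an antichain in $\suppPos_<(I^P)$ is exactly a set of variables with pairwise distinct and pairwise incomparable $C_i$'s, which is an antichain in $\suppPos(I^P)$. Dilworth's theorem then furnishes a partition of $\suppPos_<(I^P)$ into $w$ chains.

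The main obstacle is that a chain supplied by Dilworth need not be a path: the path condition additionally forbids an outside element comparable to every member of the chain that lies strictly between its extremes. I would resolve this by a swap procedure. Whenever a variable $x$ in some chain $C'$ witnesses a path violation with respect to another chain $C$, transfer $x$ from $C'$ into $C$ at its natural position; this preserves the total number of parts. To ensure termination, I would use a monovariant such as the total number of path-violating pairs, or a lexicographic refinement profile on the chains. The defining transitivity property of the support poset---$k \in C_i$ and $i \in C_j$ imply $k \in C_j$---together with a judicious choice of order $<$ (for example, placing variables with equal $C_i$ consecutively within a single part) keeps the swaps coherent with the inclusion order and prevents new violations from appearing elsewhere. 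Once the procedure stops, the $w$ parts are saturated chains, which are paths by the cover condition.

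Applying Theorem~\ref{th:allDepolarizations} to this path partition produces a depolarization $J$ of $I^P$ in a polynomial ring with $w$ variables, and the chain of (in)equalities $\projdim(I) = \projdim(I^P) = \projdim(J) \le w$ gives the desired bound. The delicate point throughout is the swap/termination argument for upgrading a minimum chain partition to a path partition without increasing the number of parts; I expect this to be the technical heart of the proof, and where the specific structure of support posets (as opposed to arbitrary posets) is essential.
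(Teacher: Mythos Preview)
Your overall strategy coincides with the paper's: pass to a depolarization in as few variables as possible, apply the Hilbert syzygy theorem, and relate the minimum number of variables to the width via Dilworth's theorem. The paper's proof is extremely brief at the final step: after noting that the minimum number of variables equals the minimum size of a path partition of $\suppPos(I^P)$, it simply writes ``since all paths are chains, by Dilworth's Theorem, this number is smaller than the size of the maximal antichain.''

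You are right to isolate the chain-versus-path distinction as the crux, and in fact you are being more careful here than the published argument. Read literally, the paper's sentence points the wrong way: every path is a chain, so the minimum size of a \emph{path} partition is at \emph{least} the minimum size of a chain partition, which equals the width by Dilworth. The paper does not explain how to go from a Dilworth chain partition to a path partition of the same cardinality; it simply elides the distinction.

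That said, your swap procedure does not close the gap either. For arbitrary finite posets the minimum path partition can strictly exceed the width: take two diamonds glued at a middle vertex (seven elements $0\lessdot a,b\lessdot 1\lessdot c,d\lessdot 2$); the width is $2$, but any two-part chain partition leaves a pair like $\{b,d\}$ whose ``gap'' is witnessed by $1$, and one checks that no two-path partition exists. So any argument of this shape must exploit structure specific to support posets (for instance, the divisibility constraints among minimal generators that rule out certain Hasse diagrams), and your monovariant sketch does not yet do this. The upshot is that your approach and the paper's are the same, and both leave the same step unjustified---you just say so explicitly, which is to your credit.
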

\begin{proof}
The projective dimension of $I$ is equal to the projective dimension of its polarization $I^P$ which is in turn the same as that of any of its depolarizations. Let $J$ be a depolarization of $I^P$ whose ambient ring has the smallest number of variables, say $r$. By Hilbert Syzygy Theorem we know that $\projdim(J)\leq r$. By Theorem \ref{th:allDepolarizations} we know that $r$ is given by the minimal number of paths in which we can partition the support poset of $I^P$ (observe that this number is the same for any $\suppPos_<(I)$ and $\suppPos(I)$). Since all paths are chains, by Dilworth's Theorem \cite{D50}, this number is smaller than the size of the maximal antichain of the support poset of $I^P$ which is the width of $\suppPos(I^P)$.
\end{proof}

An interesting question, although out of the scope of this paper, is to compare this bound with other bounds for the projective dimension of monomial ideals, like the ones in \cite{DHS13,DS15} and further explore the role of polarization with bounding invariants of monomial ideals.
\subsection{Ideals of nested type} To use depolarization, we usually study depolarization posets to find an ideal with a particularly nice property that can be transferred to its copolar ideals. For instance, here we study {\em ideals of nested type} \cite{BG06} to compute the algebraic invariants of their copolar ideals. 

A monomial ideal $I\subseteq \kb[x_1,\dots,x_n]$ is of nested type if each of its associated prime ideals is of form $\mathfrak{p}=(x_1,\dots,x_i)$ for some $i$. Ideals of nested type are also called quasi-stable \cite{S09b} or  ideals of {\em Borel type}  \cite{HPV03}. The equivalence between these families of ideals is not immediate and it has been proven by Seiler in \cite[Proposition~4.4]{S09b}.

The invariants of such ideals have been extensively studied in \cite{BG06,S09b}, and it is shown that their Castelnuovo-Mumford regularity and their projective dimension can be obtained in terms of their irreducible decompositions or in terms of their Pommaret bases. 
Moreover, a minimal free resolution of these ideals is explicitly computed in \cite[Theorem~8.6]{S09b}. 

Therefore, if a polarity class contains an ideal of nested type (i.e. quasi-stable), then we can use the aforementioned results to compute the Castelnuovo-Mumford regularity and the projective dimension of each ideal in such class. Since zero-dimensional ideals are of nested type we can make use of these considerations in the polarity classes that contain at least one zero-dimensional ideal. 

\smallskip

In the same spirit as Proposition~\ref{prop:supportIdeals} we provide a sufficient condition for a poset to be a support poset of  a zero-dimensional monomial ideal.
 
 \begin{Proposition}\label{prop:npaths}
Let $n, m_1,\dots, m_n$ be some positive integers with $1\leq m_i\leq n$ for all $i$ and let $m=\sum_{i}m_i$. Consider a poset $(\P,\subseteq)$ on subsets of $\{1,\dots,m\}$ formed by $n$ disjoint paths each of length $m_i$. If $n>2$ or $m_1=m_2$, then there is a squarefree monomial ideal $I$ whose support poset is $\P$. Moreover, if $m_i>1$ for all $i$, then there is a zero-dimensional monomial ideal copolar to $I$.
\end{Proposition}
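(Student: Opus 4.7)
The plan is to construct $I$ explicitly via Proposition~\ref{prop:supportIdeals} and then to exhibit the natural depolarization into $n$ variables. Label the nodes of the $i$-th path by $(i,1)\prec\cdots\prec(i,m_i)$, write $x_{(i,j)}$ for the corresponding variable, and observe that the prescribed poset forces $C_{(i,j)}=\{(i,1),\dots,(i,j)\}$, so the atomic monomials appearing in Proposition~\ref{prop:supportIdeals} are $m_{(i,j)}=x_{(i,1)}\cdots x_{(i,j)}$.

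For $n\geq 3$, I would assemble three families of generators: the \emph{top} generators $P_i=m_{(i,m_i)}$; the \emph{bottom crossings} $e_{ii'}=x_{(i,1)}x_{(i',1)}$ taken over pairs of paths of length at least two; and, whenever $m_i\geq 3$, the \emph{intermediate} generators $R_{i,j}=m_{(i,j)}\cdot x_{(i',1)}$ for $2\leq j\leq m_i-1$, with $i'\neq i$ drawn from another path of length at least two. For $n=2$ with $m_1=m_2=m$, I would use instead $P_1$, $P_2$ and the symmetric crossings $R_j=m_{(1,j)}\cdot m_{(2,m-j)}$ for $1\leq j\leq m-1$. In both cases the verification splits into two parts: (i) confirm that every listed monomial is a minimal generator, which uses the hypothesis $n>2$ or $m_1=m_2$ to prevent crossings from being divided by the top generators or by one another (for $n>2$ each crossing contains a bottom variable absent from every other crossing, and the symmetric shape of $R_j$ in the two-path case relies on $m_1=m_2$); and (ii) compute, for each variable $x_{(i,j)}$, the intersection of supports of the minimal generators containing it. In (ii) the top $P_i$ yields the whole path $i$, the crossings and intermediates cut this down to level $j$, and no other generator contains $x_{(i,j)}$, so the intersection equals $\{(i,1),\dots,(i,j)\}$ as required. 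Condition~(1) of Proposition~\ref{prop:supportIdeals} holds because $x_{(i,j)}$ always divides $P_i$.

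For the moreover clause, assume $m_i>1$ for all $i$, so that every $P_i$ is a proper product of at least two variables. Take the depolarization order whose parts of $\suppPos_<(I)$ are precisely the $n$ original chains, each oriented from its bottom to its top; this is manifestly a partition into paths, so by Proposition~\ref{prop:oneDepolarization} it induces a depolarization $J\subseteq\kb[y_1,\dots,y_n]$ of $I$, obtained by the substitution $x_{(i,j)}\mapsto y_i$ on each generator. Under this substitution $P_i$ becomes the pure power $y_i^{m_i}$, so $J$ contains a power of every variable and is therefore zero-dimensional. The main obstacle in the whole argument is the minimality check in step~(i): ruling out all potential divisibilities among the three families of generators is precisely where the hypotheses on $n$ and the $m_i$ are used; once that is in hand, the intersection computation in (ii) and the passage to the zero-dimensional copolar ideal are formal.
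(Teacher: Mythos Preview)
Your overall strategy mirrors the paper's: build the ideal from ``top'' monomials spanning each path, ``intermediate'' monomials that stop at level $j$ and borrow one bottom variable from a foreign path, and ``bottom crossings'' linking the roots of different paths; then depolarize along the paths to get pure powers. But your specific construction for $n\geq 3$ fails precisely the minimality check you flag as the main obstacle. If both $i$ and $i'$ index paths of length at least two, you include the crossing $e_{ii'}=x_{(i,1)}x_{(i',1)}$; and for any intermediate generator $R_{i,j}=x_{(i,1)}\cdots x_{(i,j)}\,x_{(i',1)}$ with that same auxiliary index $i'$, you have $e_{ii'}\mid R_{i,j}$. So every $R_{i,j}$ is redundant. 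Once the $R_{i,j}$ are stripped from $G(I)$, a mid-level variable $x_{(i,j)}$ with $1<j<m_i$ occurs only in $P_i$, forcing $C_{(i,j)}=\{(i,1),\dots,(i,m_i)\}$ rather than $\{(i,1),\dots,(i,j)\}$, and the support poset collapses. (Your side claim that for $n>2$ ``each crossing contains a bottom variable absent from every other crossing'' is also false once there are three or more long paths: among $e_{12}$, $e_{13}$, $e_{23}$ every bottom variable appears twice.)

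The paper avoids this collision by being much more selective. Its intermediate monomials $\mu_{i,j}$ pick the auxiliary bottom variable by a fixed cyclic rule $b_{i,j}=a_{\lceil i+j-1\rceil,1}$ (indices modulo $n$, which is where the hypothesis $m_i\leq n$ enters), and its bottom-crossing set $G_3$ is not all pairs but only those products $a_{i,1}a_{i',1}$ that involve bottom variables not already sufficiently witnessed by the intermediates \emph{and} that do not divide any monomial in $G_1\cup G_2$. This explicit exclusion is exactly what blocks the divisibility you run into. Your $n=2$, $m_1=m_2$ construction and the zero-dimensional depolarization argument are fine as written; the gap is solely in coordinating the crossings with the intermediates for $n\geq 3$.
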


\begin{proof}
Let $\P=A_1\sqcup\cdots \sqcup A_n$ where $A_i=\{\{a_{i,1}\},\{a_{i,1},a_{i,2}\},\dots,\{a_{i,1},\dots,a_{i,m_i}\}\}$. We assume that $n>2$ or $m_1\neq m_2$. The remaining case is studied in Example~\ref{exam:n=2}. 

For ease of notation, we identify each variable $x_{a_{i,j}}$ with its subindex $a_{i,j}$. We can assume without loss of generality that $m_1\geq \cdots \geq m_n$. 

We first construct a monomial ideal generated by the following sets of monomials:
\begin{enumerate}
\item $G_1$ consists of the monomials $\mu_i=a_{i,1}\cdots a_{i,{m_i}}$ for all $i$ with $m_i>1$.

\item $G_2$ consists of the monomials $\mu_{i,j}=a_{i,1}\cdots a_{i,j}b_{i,j}$  for all $i,j$ with $1<j<m_i$. Here, $b_{i,j}=a_{\lceil{i+j-1}\rceil,1}$ where $\lceil{i+j-1}\rceil$ denotes $i+j-1$ modulo $n$. Note that the indices $b_{i,j}$ are pairwise distinct for each $i$, as $m_i\leq n$.

\item $G_3=\{a_{i,1}a_{i',1}:\ a_{i,1},a_{i',1}\in G'_3\text{ and }a_{i,1}a_{i',1}\nmid m\ \text{for any}\ m\in G_1\cup G_2\}$, where $G'_3$ consists of all indices $a_{i,1}$ for $m_i=1$ that appeared at most once as $b_{j,k}$ in $G_2$, and indices $a_{i,1}$ for $m_i>1$ that never appeared as $b_{j,k}$ in $G_2$. 
\end{enumerate}
We now prove that the support poset of the ideal $I_G\subseteq \kb[x_{a_{1,1}},\dots,x_{a_{n,m_n}}]$ generated by the above sets of monomials is $(\P,\subseteq)$. 

To see this, first observe that, by construction, the monomials in $G=G_1\cup G_2\cup G_3$ do not divide each other. 
Now we show that for each pair of variables $a_{i,j}$, $a_{i',j'}$ with $i\neq i'$ there is at least one monomial in $G$ which only contains one of these variables. If $j>1$ or $j'>1$, then it is easy to find such a monomial in $G_1$. Now assume that $j=j'=1$. If $m_i>1$ or $m_{i'}>1$, then such a monomial can be found in $G_1$. Otherwise, they appear in separate monomials in $G_2$ or $G_3$.

Now, we show that $C_{a_{i,j}}=\{ a_{i,1},\dots,a_{i,j}\}$ for every variable $a_{ij}$. First note that every variable $a_{i,1}$ appears at least once in $G_2$ or $G_3$ without the rest of the variables $a_{i,j}$ for $j>1$. Thus $C_{a_{i,1}}=\{a_{i,1}\}$. For $m_i>1$, $a_{i,m_i}$ appears only in the monomial $\mu_i=a_{i,1}\cdots a_{i,m_i}$ in $G_1$, hence $C_{a_{i,m_i}}=\{ a_{i,1},\dots, a_{i,m_i}\}$. 
Now assume that $1<j<m_i$.  The variable $a_{i,j}$ appears always together with all the variables $a_{i,j'}$ for $j'<j$ since they all divide the monomials $\mu_i$ in $G_1$ and $\mu_{i,j}\in G_2$. On the other hand, if $m_i>\ell>j$, then by the construction of $G_2$, there is at last one monomial $\mu_{i,j}$ in which $a_{i,\ell}$ is not present. Hence, $C_{a_{i,j}}=\{ a_{i,1},\dots,a_{i,j}\}$ which completes the proof.

Moreover, if $m_i>1$ for each $i$, then using the chain partition given by the disjoint paths themselves, the corresponding depolarization of $I$ has one variable for each $i$ whose pure power appears in $G_1$, which implies that $I$ is zero-dimensional. 
\end{proof}

\begin{Example}\label{exam:n=2}
Let $n=2$, $m_1=2$ and $m_2=1$. Then the minimal generating set of any monomial ideal $I$ with the support poset  $\P=\{\{1\},\{1,2\},\{3\}\}$ must include a monomial divisible by $x_1x_2$. Therefore, the only candidates for such an ideal are $\langle x_1x_2x_3\rangle$, $\langle x_1x_2, x_3\rangle$, $\langle x_1x_2,x_1x_3\rangle$, $\langle x_1x_2,x_2x_3\rangle$ and $\langle x_1x_2, x_1x_3,x_2x_3\rangle$. However, none of them has $\P$ as it support poset.
\end{Example}
\begin{Example}
Consider the poset $(\P,\subseteq)$ on the following subsets of $\{1,\ldots,14\}$. Let $A_1=\{\{1\},\dots,\{1,2,3,4,5,6\}\},A_2=\{\{7\},\dots, \{7,8,9,10\}\},A_3=\{\{11\}\}, A_4=\{\{12\}\},A_5=\{\{13\}\},A_6=\{\{14\}\}$. Then following the notation of the proof of Proposition \ref{prop:npaths} we have that $(\P,\subseteq)$ is the support poset of the ideal generated by the monomials in $G_1\cup G_2 \cup G_3$, where
\begin{align*}
G_1=& \{x_1x_2x_3x_4x_5x_6, x_7x_8x_9x_{10}\}\\
G_2=&\{x_1x_2x_7,x_1x_2x_3x_{11},x_1x_2x_3x_4x_{12},x_1x_2x_3x_4x_5x_{13},x_7x_8x_{11},x_7x_8x_9x_{12}\}\\
G_3=&\{x_1x_{14}, x_{13}x_{14}\}\text{ and } G'_3=\{x_1,x_{13},x_{14}\}.
\end{align*}
\end{Example}


\begin{Example}\label{ex:quasistable}
Consider the following monomial ideal in 9 variables:
$$I=\langle x_1x_2x_3x_4,x_5x_6x_7,x_8x_9,x_1x_2x_3x_5,x_1x_2x_8,x_5x_6x_8,x_1x_5x_8\rangle.$$
By computing its associated primes we see that $I$ is not quasi-stable. However, by determining the depolarization poset of $I$ we found that the ideal, $$J=\langle y_1^4, y_2^3, y_3^2, y_1^3y_2,y_1^2y_3,y_2^2y_3,y_1y_2y_3\rangle\subseteq \kb[y_1,y_2,y_3]$$ is one of its maximum depolarizations. As $J$ is a zero-dimensional ideal, and hence quasi-stable, by applying the results of \cite{S09b} we obtain $\pd(J)=2$ and $\reg(J)=5$, thus obtaining these invariants for $I$.
\end{Example}

\subsection{Polarization and free resolutions}
We finish this section with some considerations on polarization and free resolutions. The results in this subsection will be used in Section \ref{sec:systems} when we study the algebraic analysis of system reliability. First we define the polarization of a resolution. Let 
\[\FF: \cdots \xrightarrow{\ } F_i\xrightarrow{\delta_i} F_{i-1}\xrightarrow{\delta_{i-1}}F_{i-2} \xrightarrow{\ } \dots\]
be a multigraded chain complex of $R$-modules, i.e., $F_i=\bigoplus_{j=1}^{r_i}R(-\mu_{i,j})$ where $\mu_{ij}\in \NN^n$ and the differentials $\delta_i$ have multidegree $0$. The differentials $\delta_{i}$ are given by matrices $A_i$ whose entries are monomials in $\NN^n$. We denote by $e_{i,j}$ the standard generator of the $j$-th summand of $F_{i}$ whose multidegree is $\mu_{i,j}$. Then the $j$-th column of $A_i$ is given by $(a^i_{1,j},a^i_{2,j},\dots,a^i_{r_{i-1},j})$ where $\delta(e_{i,j})=\sum_{k=1}^{r_{i-1}}a^i_{k,j}e_{i-1,k}$ and the $e_{i-1,k}$ are the standard generators of $F_{i-1}$. The nonzero entries $a^i_{k,j}$ are given by $\mu_{i,j}/\mu_{i-1,k}$.

\begin{Definition}
We define $\overline{\FF}$, the {\em polarization} of $\FF$, as the chain complex given by
\[\overline{\FF}: \cdots \xrightarrow{\ } \overline{F}_i\xrightarrow{\overline{\delta}_i} \overline{F}_{i-1}\xrightarrow{\overline{\delta}_{i-1}}\overline{F}_{i-2} \xrightarrow{\ } \dots\]
where $\overline{F}_i=\bigoplus_{j=1}^{r_i}R(-\overline{\mu}_{ij})$ if $F_i=\bigoplus_{j=1}^{r_i}R(-\mu_{ij})$ and the matrices $\overline{A}_i$ of the differentials $\overline{\delta}_i$ are given by
 \[
 \overline{a}^i_{j,k}=
 	\begin{cases}
      		0 &\quad\text{if}\  a^i_{j,k}=0\\
		x_{1,{c_1+1}}\cdots x_{1,b_1}\cdots x_{n,{c_n+1}}\cdots x_{n,b_n}& \quad\text{if } a^i_{j,k}\neq 0,\, \mu_{i,j}=\xb^b,\ \mu_{i-1,k}=\xb^c,
            \end{cases}
 \]
 where $b=(b_1,\ldots,b_n)$ and $c=(c_1,\ldots,c_n)$. Note that if $0\neq a^i_{j,k}\in\kb$ then $\overline{a}^i_{j,k}=a^i_{j,k}$.
 \end{Definition}
By polarizing a resolution of a monomial ideal we obtain a resolution of its polarization. This is a consequence of Theorem 3.3 in \cite{GPW99}. Here, we present our own proof to keep track of the explicit changes in each multidegree. The use of depolarization to compute resolutions of a monomial ideal form the resolutions of its polarization is also a well-known result, see Examples 3.4 in \cite{GPW99} and see \cite{F82}.

\begin{Proposition}\label{prop:resolution}
Let $\FF$ be a multigraded free resolution of a monomial ideal $I$. The polarization $\overline{\FF}$  of $\FF$ is a multigraded free resolution of the polarization of $I$. Moreover, the ranks and the graded ranks of $\overline{\FF}$ are equal to those of $\FF$. In the case of multigraded ranks, we have that the $(i,\mu)$-rank of $\FF$ equals the $(i,\overline{\mu})$-rank of $\overline{\FF}$.
\end{Proposition}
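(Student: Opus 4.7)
The plan is to verify three things in turn: that $\overline{\FF}$ is a chain complex of multigraded $S$-modules, that its homology vanishes in positive degrees and equals $I^P$ in degree zero, and that the rank statements follow.

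First I would check that $\overline{\delta}_i \circ \overline{\delta}_{i+1}=0$ entry-by-entry. The $(j,\ell)$-entry of the composition is $\sum_k \overline{a}^{i}_{j,k}\,\overline{a}^{i+1}_{k,\ell}$, while in the original complex the analogous sum $\sum_k a^{i}_{j,k}\,a^{i+1}_{k,\ell}$ vanishes. Every nonzero summand $a^{i}_{j,k}\,a^{i+1}_{k,\ell}$ is a monomial of common multidegree $\mu_{i+1,\ell}-\mu_{i-1,j}$, and a direct inspection of the formula defining $\overline{a}^i_{j,k}$ shows that the corresponding polarized product $\overline{a}^{i}_{j,k}\,\overline{a}^{i+1}_{k,\ell}$ is the unique squarefree monomial of multidegree $\overline{\mu}_{i+1,\ell}-\overline{\mu}_{i-1,j}$, independent of the intermediate index $k$. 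Hence the identity $\sum_k a^{i}_{j,k}\,a^{i+1}_{k,\ell}=0$, which only records coefficient cancellations among equal monomials, transfers verbatim to the polarized sum.

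Next I would establish acyclicity by invoking Proposition \ref{prop:Froberg}. Let $\underline{h}$ be a regular sequence of linear forms on $S/I^P$ with $S/(I^P+(\underline{h}))\cong R/I$. The quotient $\pi\colon S\twoheadrightarrow S/(\underline{h})\cong R$ sends each polarized variable $x_{p,q}$ to $x_p$, so $\pi(\overline{\mu}_{i,j})=\mu_{i,j}$ and, crucially, $\pi(\overline{a}^i_{j,k})=a^i_{j,k}$; consequently $\overline{\FF}\otimes_S R \cong \FF$ as multigraded complexes. Because each $\overline{F}_i$ is a free $S$-module and $\underline{h}$ is regular on $S/I^P$, tensoring $\overline{\FF}$ against the Koszul resolution $K_\bullet(\underline{h})$ of $R$ over $S$ produces a double complex whose two spectral sequences both converge to $H_\bullet(\overline{\FF}\otimes_S R)=H_\bullet(\FF)$. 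A standard induction on the length of $\underline{h}$ via the associated long exact Tor sequences then forces $H_i(\overline{\FF})=0$ for $i>0$ and $H_0(\overline{\FF})\cong I^P$.

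Finally, the rank statements are immediate by construction: $\overline{F}_i=\bigoplus_{j=1}^{r_i}S(-\overline{\mu}_{i,j})$ is indexed by the same set $\{1,\dots,r_i\}$ as $F_i$, giving equality of total ranks and of graded ranks, and the summand-by-summand bijection $R(-\mu_{i,j})\leftrightarrow S(-\overline{\mu}_{i,j})$ yields the claimed equality of the $(i,\mu)$-rank of $\FF$ with the $(i,\overline{\mu})$-rank of $\overline{\FF}$. The main obstacle underpinning both the complex property and the acyclicity is the compatibility $\pi(\overline{a}^i_{j,k})=a^i_{j,k}$, a direct but notation-heavy unpacking of the definition: for $\mu_{i,k}=\xb^b$ and $\mu_{i-1,j}=\xb^c$, the polarized entry $\overline{a}^i_{j,k}=x_{1,c_1+1}\cdots x_{1,b_1}\cdots x_{n,c_n+1}\cdots x_{n,b_n}$ is engineered so that each of its variables $x_{p,q}$ contributes exactly one copy of $x_p$ under $\pi$, reproducing $a^i_{j,k}=\xb^b/\xb^c$ on the nose.
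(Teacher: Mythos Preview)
Your proof is correct. For the chain-complex identity $\overline{\delta}^2=0$ you argue exactly as the paper does: both of you observe that every nonzero summand in $\sum_k a^{i}_{j,k}a^{i+1}_{k,\ell}$ has fixed multidegree $\mu_{i+1,\ell}-\mu_{i-1,j}$, so the scalar cancellations survive polarization verbatim. The rank statements are handled identically in both proofs, as immediate from the definition of $\overline{F}_i$.

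The genuine difference is in establishing exactness. The paper's argument is a single sentence: after checking $\overline{\delta}^2=0$ it simply asserts that ``polarization induces a multigraded isomorphism'' and that ``the result follows,'' implicitly leaning on the cited \cite{GPW99} for the actual acyclicity. Your route is more self-contained: you invoke Proposition~\ref{prop:Froberg} to produce the regular sequence $\underline{h}$, verify the key compatibility $\overline{\FF}\otimes_S R\cong\FF$ via $\pi(\overline{a}^i_{j,k})=a^i_{j,k}$, and then run the standard Nakayama-type descent (one linear form at a time, using the long exact sequence and graded Nakayama on the finitely generated homology modules) to pull acyclicity of $\FF$ back to $\overline{\FF}$. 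This buys you an explicit, internal proof that does not outsource exactness to the literature; the paper's version buys brevity at the cost of that transparency. The spectral-sequence sentence in your write-up is not strictly needed---the induction on the length of $\underline{h}$ already does all the work---but it does no harm.
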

\begin{proof}
Given the fact that $\FF$ is a multigraded free resolution of $I$ and the construction of $\overline{\FF}$ by polarization, we only need to prove that ${\rm Im}(\overline{\delta}_i)={\rm Ker}(\overline{\delta}_{i-1})$.

We know that $\delta^2=0$. Explicitly, we have that
\[\delta_{i-1}\delta_i(e_{i,j})=\sum_{k=1}^{r_{i-1}} \sum_{l=1}^{r_{i-2}} a^{i}_{k,j} a^{i-1}_{l,k} e_{i-2,l}=0 \;\forall\, i,j. \]
This implies that
\begin{equation}
\label{eq:coef_1}
\sum_{k=1}^{r_{i-1}} \sum_{l=1}^{r_{i-2}} a^{i}_{k,j} a^{i-1}_{l,k}=\sum_{k=1}^{r_{i-1}} \sum_{l=1}^{r_{i-2}} \frac{\mu(e_{i,j})}{\mu(e_{i-1,k})}\frac{\mu(e_{i-1,k})}{\mu(e_{i-2,l})}=0 \;\forall\, i,j.
\end{equation}
On the other hand, from the definitions of the maps in $\overline{\FF}$ we have that for any $i,j$
\[\overline{\delta}_{i-1}\overline{\delta}_i(\overline{e}_{i,j})=\sum_{k=1}^{r_{i-1}} \sum_{l=1}^{r_{i-2}} \overline{a}^i_{k,j}\overline{a}^{i-1}_{l,k}\overline{e}_{i-2,l}.\]
Now by polarizing (\ref{eq:coef_1}), we obtain
\[\sum_{k=1}^{r_{i-1}} \sum_{l=1}^{r_{i-2}} \frac{\overline{\mu}(e_{i,j})}{\overline{\mu}(e_{i-1,k})}\frac{\overline{\mu}(e_{i-1,k})}{\overline{\mu}(e_{i-2,l})}=\sum_{k=1}^{r_{i-1}} \sum_{l=1}^{r_{i-2}} \overline{a}^i_{k,j}\overline{a}^{i-1}_{l,k}=0,\]
and hence $\overline{\delta}_{i-1}\overline{\delta}_i(\overline{e}_{i,j})=0$ for all $i,j$.
Since polarization induces a multigraded isomorphism, the result follows.
\end{proof}

Proposition \ref{prop:resolution} is important in our context since it allows us to use polarization in the algebraic analysis of system reliability and obtain formulas and bounds for the reliability of the system corresponding to the polarization of a given ideal. In particular, we can use the so-called Mayer-Vietoris trees as one of the main tools applied in \cite{SW09,SW10}. Mayer-Vietoris trees are a way to encode the support, i.e., the collection of multidegrees of the generators of the free modules of the so-called mapping cone resolution \cite {CE95, DM14} of a monomial ideal. We refer the reader to \cite{S09} for the definition and the basic properties of the Mayer-Vietoris trees.

\begin{Corollary}
Let $\FF$ be a cone resolution of a monomial ideal $I$, then $\overline{\FF}$ is a cone resolution of the polarization of $I$. In particular, if $\TT$ is a Mayer-Vietoris tree of $I$, then the polarized tree $\overline{\TT}$ is a Mayer-Vietoris tree of the polarization of $I$.
\end{Corollary}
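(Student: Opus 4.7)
The plan is to leverage Proposition~\ref{prop:resolution} as the core engine and then verify that the additional \emph{structural} information carried by a cone resolution, and in particular by a Mayer--Vietoris tree, is preserved under the polarization operation on complexes. Since Proposition~\ref{prop:resolution} already establishes that $\overline{\FF}$ is a multigraded free resolution of $I^P$ with matching multigraded ranks, what remains is to check that the specific short exact sequences from which a cone resolution is assembled survive polarization intact.

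First, I would recall the basic recursive description of a cone resolution of a monomial ideal $I$: one picks a short exact sequence
\begin{equation*}
0 \to K \to L \to I \to 0
\end{equation*}
where $K$ and $L$ are monomial ideals equipped with resolutions $\FF_K$ and $\FF_L$ (themselves possibly produced by cones), and then $\FF = \mathrm{Cone}(\FF_K \to \FF_L)$. Polarization commutes with the relevant operations: for a fixed multidegree vector $a$ that dominates the exponents appearing in $K$, $L$ and $I$, one has $\overline{m \cdot m'} = \overline{m}\cdot\overline{m'}/\overline{\gcd(m,m')}$-style identities and, more importantly, $\overline{\mathrm{lcm}(m,m')} = \mathrm{lcm}(\overline{m},\overline{m'})$. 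These identities imply that polarizing the short exact sequence above gives
\begin{equation*}
0 \to K^P \to L^P \to I^P \to 0,
\end{equation*}
and that $\overline{\FF}_K$, $\overline{\FF}_L$ are resolutions of $K^P$, $L^P$ by Proposition~\ref{prop:resolution}. Because the mapping cone construction is a purely formal operation on the data of a chain map, and because the polarized differentials defined in the text preserve the factorization $\mu_{i,j}/\mu_{i-1,k} \mapsto \overline{\mu}_{i,j}/\overline{\mu}_{i-1,k}$, we obtain $\overline{\FF} = \mathrm{Cone}(\overline{\FF}_K \to \overline{\FF}_L)$. Iterating this recursive observation proves the first assertion.

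For the Mayer--Vietoris tree statement, I would observe that an MVT is exactly a bookkeeping device for the iterated cone construction above, where at each node one selects a pivot monomial $p$ and uses the short exact sequence
\begin{equation*}
0 \to \langle p \rangle \cap J \to \langle p \rangle \oplus J \to \langle p \rangle + J \to 0,
\end{equation*}
with $\langle p\rangle \cap J = \langle \mathrm{lcm}(p,q) : q \in G(J)\rangle$. Polarizing with respect to the exponent vector of the \emph{original} ideal $I$, the identity $\overline{\mathrm{lcm}(p,q)} = \mathrm{lcm}(\overline{p},\overline{q})$ shows that the pivot $p$ polarizes to $\overline{p}$ and that the child nodes of the tree polarize to the corresponding child nodes of an MVT of $I^P$. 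Inductively, the entire tree $\overline{\TT}$ therefore encodes a valid MVT of $I^P$.

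The main obstacle will be the bookkeeping in the previous paragraph: one must be careful to polarize all ideals occurring at the nodes of the tree with respect to a single common exponent vector (namely that of $I$) rather than with respect to their own max exponents, since the latter would land them in different polynomial rings and break the equality of cones. Once this uniform polarization convention is fixed, the compatibility $\overline{\mathrm{lcm}(\cdot,\cdot)} = \mathrm{lcm}(\overline{\cdot},\overline{\cdot})$ and the fact that the multidegree ranks are preserved (Proposition~\ref{prop:resolution}) close the argument without further calculation.
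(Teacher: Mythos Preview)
The paper states this result as a corollary without providing any proof, treating it as an immediate consequence of Proposition~\ref{prop:resolution} and the discussion preceding it. Your argument is correct and supplies precisely the details the paper leaves implicit: the compatibility $\overline{\lcm(m,m')}=\lcm(\overline{m},\overline{m'})$ ensures that the Mayer--Vietoris short exact sequences polarize to the corresponding sequences for $I^P$, Proposition~\ref{prop:resolution} guarantees the polarized complexes are again resolutions, and the mapping cone construction then transports formally. Your observation that all nodes of the tree must be polarized with respect to the exponent vector of the original ideal $I$ (rather than their own maximal exponents) is the one genuine subtlety, and you handle it correctly; the paper does not comment on this point at all. In short, you have written out the proof the authors omitted, and there is nothing to correct.
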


\section{Multi-state systems via binary systems and vice versa}\label{sec:systems}
We now turn to the application of monomial ideals to multi-state system analysis, which is the motivation of this work. The algebraic approach to system reliability, developed by the authors for the binary systems, gives an insight on the structure of the systems under study besides providing good computational tools to obtain reliability polynomials and bounds. In the next few examples, we show how the structure of multi-state systems can be analyzed by algebraic means and that this analysis can be transferred between binary and multi-state systems using polarization and depolarization.

\subsection{Multi-state coherent systems}\label{subset:systems}
In reliability theory \cite{BP75,BK94,KZ03,N11}, a {\em system} $S$ is given by a set of components, say $n$, denoted by $c_i$ for $i\in\{1,\dots,n\}$. Each $c_i$ can be in a discrete number of ordered states, i.e. levels of performance, $\Sc_i=(0,\dots,m_i)$. The system itself has $m+1$ possible states $\Sc=(0,\dots,m)$ for some integer $m$. The states of the system measure the overall performance of the system. In this paper, we assume that the system (respectively the component) in state $j$ represents {\it better performance} than the system (respectively the component) in state $i$, whenever $j>i$. We define a structure function $\phi:\Sc_1\times \cdots \times \Sc_n\rightarrow \Sc$ that for each $n$-tuple of component states, outputs a state of the system. We say that the system is {\it coherent} if $\phi(\xb)\geq\phi(\yb)$ whenever $\xb>\yb$, which means $x_i\geq y_i$ for every $i$ and there is at least one index $i$ such that $x_i>y_i$. Conversely, $\phi(\xb)\leq\phi(\yb)$ whenever $\xb<\yb$. Examples of coherent systems include electrical and transport networks, pipelines, biological and industrial systems among many others \cite{KZ03}.  If $m_i=1$, then we say that component $c_i$ is {\it binary}. If $m=1$, then we say that the system is binary. We have therefore the following types of systems with respect to their number of states:
\begin{itemize}
\item[-] If $m=1$ and $m_i=1$ for all $i$, we have a binary system with binary components. These are usually simply referred to as binary systems.
\item[-] If $m>1$ and $m_i=1$ for all $i$, we have a multi-state system with binary components.
\item[-] If $m=1$ and there is at least one $i$ with $m_i>1$, we have a binary system with multi-state components.
\item[-] If $m>1$ and there is at least one $i$ with $m_i>1$, we have a multi-state system with multi-state components.
\end{itemize}
We follow the notation in \cite{GN17,N11}. However, we allow a more general kind of system by not restricting to the case  $\max(\Sc)\leq\max(\Sc_i)$ for all  $i$. For other definitions of multi-state system and a review of multi-state reliability analysis, see \cite{LL03,YJ12} and the references therein.

\subsection{The algebraic method in reliability analysis}\label{subset:algebraicreliability}
Let $S$ be a coherent system with $n$ components. Let $0<j\leq m$, we denote by $\Fc_{S,j}$ the set of tuples of components' states $\xb$ such that $\phi(\xb)\geq j$. The elements of $\Fc_{S,j}$ are called {\it $j$-working states} or {\it $j$-paths} of $S$. If $m=1$, then we simply speak of {\it working states} or {\it paths}. The tuples of components' states  $\xb$ with $\phi(\xb)<j$ are called {\it $j$-failure states} or {\it $j$-cuts}, respectively {\it failure states} or {\it cuts} for $m=1$. Let $\overline{\Fc}_{S,j}$ be the set of minimal $j$-working states (minimal $j$-paths), i.e., states in $\Fc_{S,j}$ such that degradation of the performance of any component provokes that the overall performance of the system is degraded to $j'<j$.

Now, let $R=\kb[x_1,\dots,x_n]$ be a polynomial ring over a field $\kb$. Each tuple of components' states $(s_1,\dots,s_n)\in \Sc_1\times\cdots \times \Sc_n$ corresponds to the monomial  $x_1^{s_1}\cdots x_n^{s_n}$ in $R$. The {\it coherent property} of the system is equivalent to saying that the elements of $\Fc_{S,j}$ correspond to the monomials in an ideal, denoted by $I_{S,j}$ and called the {\it $j$-reliability ideal} of $S$. The unique minimal monomial generating set of $I_{S,j}$ is formed by the monomials corresponding to the elements of $\overline{\Fc}_{S,j}$ (see \cite[\S 2]{SW09} for more details). Hence, obtaining the set of minimal $j$-paths of $S$ is equivalent to compute the minimal generating set of $I_{S,j}$.  

In order to compute the {\it $j$-reliability} of $S$, i.e., the probability that the system is performing at least at level $j$, we can use the numerator of the Hilbert series of $I_{S,j}$, denoted by $H_{I_{S,j}}$. The polynomial $H_{I_{S,j}}$ gives a formula, in terms of $x_1,\dots,x_n$ that enumerates all the monomials in $I_{S,j}$, i.e., the monomials corresponding to the states in $\Fc_{S,j}$. Hence, computing the (numerator of) the Hilbert series of $I_{S,j}$ provides a method to compute the $j$-reliability of $S$ by substituting $x_i^a$ by $p_{i,a}$, the probability that the component $i$ is at least performing at level $a$, as explored in \cite[\S 2]{SW09} (for the binary case).

Often in practice it is more useful to have {\it bounds} on the $j$-reliability of $S$ rather than the complete precise formula. In order to have a formula that can be truncated at different summands to obtain bounds for the $j$-reliability in the same way that we truncate the inclusion-exclusion formula to obtain the so-called Bonferroni bounds, we need a special way to write the numerator of the Hilbert series of $I_{S,j}$. This convenient form is given by the alternating sum of the ranks in any free resolution of the ideal $I_{S,j}$. Every monomial ideal has a {\em minimal} free resolution, which provides the tightest bounds among the aforementioned ones. In general, the closer the resolution is to the minimal one, the tighter the bounds obtained, for full details see, e.g., \cite[\S 3]{SW09}.

In summary, the algebraic method for computing the $j$-reliability of a coherent system $S$ works as follows: 
\begin{enumerate}
\item{Associate to the system $S$ its $j$-reliability ideal $I_{S,j}$}.
\item{Obtain the minimal generating set of $I_{S,j}$ to get the set $\overline{\Fc}_{S,j}$.}
\item{Compute the Hilbert series of $I_{S,j}$ to have the $j$-reliability of $S$.}
\item[(3')]{Compute any free resolution of $I_{S,j}$. The alternating sum of the ranks of this resolution gives a formula for the Hilbert series of $I_{S,j}$ i.e., the unreliability of $S$, which provides bounds by truncation at each summand.}
\end{enumerate}

The choice between steps (3) or (3') depends on our needs. If we are only interested in computing the full reliability formula, then we can use any algorithm that computes Hilbert series in step (3). However, if we need bounds for our system reliability, then 
we can compute any free resolution of $I_{S,j}$ and thus perform step (3'). If the performing probabilities of different components are independent and identically distributed (i.i.d), then in points (3) and (3') of this procedure we only need the graded version of  Hilbert series and free resolutions. Otherwise, we need their multigraded version. For more details and the proofs of the results described here, we refer to \cite{SW09,SW12}. To see more applications of this method in reliability analysis we refer to \cite{SW10,SW11,SW15}.

We can study multi-state systems via binary systems and vice versa by means of polarization and depolarization of their $j$-reliability ideals. The main reason behind this approach is that 
the Hilbert series and free resolutions of monomial ideals and their polarizations are related, see Proposition \ref{prop:copolarProperties}.
For a complete application of the polarization and depolarization operations in the algebraic method, we also need the statement that the ranks of the modules in any resolution of a monomial ideal and its polarization are the same, see Proposition \ref{prop:resolution}. When using the polarization of a $j$-reliability ideal to study the system's reliability, we have to carefully adapt the probability associated to the monomials in the new ideal. Under independence assumption, the term $x_1^{a_1}x_2^{a_2}$ contributes $\prob(c_1\geq a_1)\cdot \prob(c_2\geq a_2)$ to the reliability of the system. If independence is not assumed, then we need to individually study the probability evaluation of each monomial. In general, one needs to know the full distribution on the failure set, although the structure of the sets are distribution-free. In the case of polarization of a system reliability ideal, we have to take care of monomials that include products of the type $x_{i,1}\cdots x_{i,k}$ which must be evaluated as $\prob(c_i\geq k)$.

\subsection{Examples}\label{subsec:examples}
Our first two examples apply the algebraic method to the analysis of the reliability of multi-state coherent systems. We show that our approach, using the algebraic method, can be used to analyze the reliability of such systems in an efficient and clear way. The third and fourth examples we propose, will demonstrate how  the depolarization method can be used to compute the reliability of various systems. In particular,  in the fourth example we demonstrate how depolarizing a family of system ideals would make the reliability computations faster.

\medskip

\noindent {\bf Decreasing MS $k$-out-of-$n$ system:} \label{exam:1}
This example is taken from \cite{HZW00} in which the authors define generalized multi-state $k$-out-of-$n$ systems (denoted MS $k$-out-of-$n$) as $n$-component systems with $\phi(\xb)\geq j$ ($j=0,\dots,m)$, if there exists an integer value $l$ ($j\leq l\leq m$) such that at least $k_l$ components are in states at least as good as $j$. In that paper the authors describe ad-hoc methods for computing the reliability of MS $k$-out-of-$n$ systems. Different computations are proposed for the cases that the system is increasing or decreasing (which means that the sequence of $k_l$ is respectively increasing or decreasing) and also different computations need to be done if the components are identically distributed or not. For instance, Example 8 in \cite{HZW00} is an MS $k$-out-of-$3$ system with four states $(0,1,2,3)$ such that $k_3=2$, $k_2=2$ and $k_1=3$, i.e., the system is
\begin{itemize}
\item[-] In state $3$ or above if at least $2$ components are in state $3$ or above.
\item[-] In state $2$ or above if at least $2$ components are in state $2$ or above.
\item[-] In state $1$ or above if all $3$ components are in state $1$ or above, or if at least $2$ components are in state $2$ or above, or if at least $2$ components are in state $3$ or above.
\end{itemize}
This is called a {\em decreasing} MS $k$-out-of-$n$ system, because the sequence of $k_l$ is a decreasing one. In this example the probabilities of different components are: $p_{1,0}=0.1$, $p_{1,1}=0.2$, $p_{1,2}=0.3$, $p_{1,3}=0.4$, $p_{2,0}=0.1$, $p_{2,1}=0.2$, $p_{2,2}=0.2$, $p_{2,3}=0.6$, $p_{3,0}=0.1$, $p_{3,1}=0.2$, $p_{3,2}=0.4$, $p_{3,3}=0.3$.
Here, we use the algebraic method to compute the probability that the system is respectively in states $0$, $1$, $2$ and $3$, we and obtain the same exact results as in \cite{HZW00}.
\begin{itemize}
\item[-] For the system to be in state $3$, there must be at least  $2$ components in state $3$ or above ($k_3=2$). Hence, the corresponding ideal is $I_{S_3}=\langle x^3y^3,x^3z^3,y^3z^3\rangle$. The numerator of the Hilbert series is $H_{I_{S_3}}=x^3y^3+x^3z^3+y^3z^3-2(x^3y^3z^3)$ and when substituting the probabilities, we have that the probability that the system is in state $3$ or above, denoted by $R_{S,3}$, is $0.396$, which equals the probability that the system is exactly in state $3$, denoted $r_{S,3}$.
\item[-] The system is in state $2$ or above if at least $2$ components are in state $2$ or above, hence $I_{S_2}=\langle x^2y^2,x^2z^2,y^2z^2\rangle$. The numerator of the Hilbert series is $H_{I_{S_2}}=x^2y^2+x^2z^2+y^2z^2-2(x^2y^2z^2)$ and we obtain $R_{S,2}=0.826$ and $r_{S,2}=R_{S,2}-R_{S,3}=0.826-0.396=0.430$.
\item [-] Since $k_1=3$, the system is in state $1$ or above if all $3$ components are in state $1$ or above or if at least $2$ components are in state $2$ or above, or if at least $2$ components are in state $3$ or above. The corresponding ideal is then $I_{S_1}=\langle xyz,x^2y^2,x^2z^2,y^2z^2\rangle$ with $H_{I_{S_1}}=xyz+x^2y^2+x^2z^2+y^2z^2-(xy^2z^2+x^2yz^2+x^2y^2z)$. Thus, we obtain $R_{S,1}=0.89$ and $r_{S,1}=R_{S,1}-R_{S,2}=0.89-0.826=0.064$.
\item[-] Finally $r_{S,0}=R_{S,0}-R_{S,1}=1-0.89=0.11$.
\end{itemize}
With respect to the computational applicability of the algebraic method, the authors in \cite[page 109]{HZW00} indicate that ``For most practical engineering problems, a limited state number $M$, for example $M=10$, is big enough to describe the performances of the system and its components''. We wrote a program in the computer algebra system \texttt{Macaulay2} \cite{M2} to compute the Hilbert series of all the $j$-reliability ideals in a decreasing MS $k$-out-of-$n$ system. The program computes the reliability polynomials for these systems with $10$ components and $10$ levels of performance in less than a minute. Figure \ref{fig:reliabilities} shows the reliability polynomials of a $k$-out-of-$10$ decreasing MS system for which $k_1$ to $k_{10}$ are $(9,8,7,6,5,5,4,4,3,2)$. The program took $40$ seconds on a laptop\footnote{CPU: intel i7-4810MQ, 2.80 GHz. RAM: 16Gb}, hence the method is practical.

\begin{figure}[t]
\begin{center}
\includegraphics[scale=0.65]{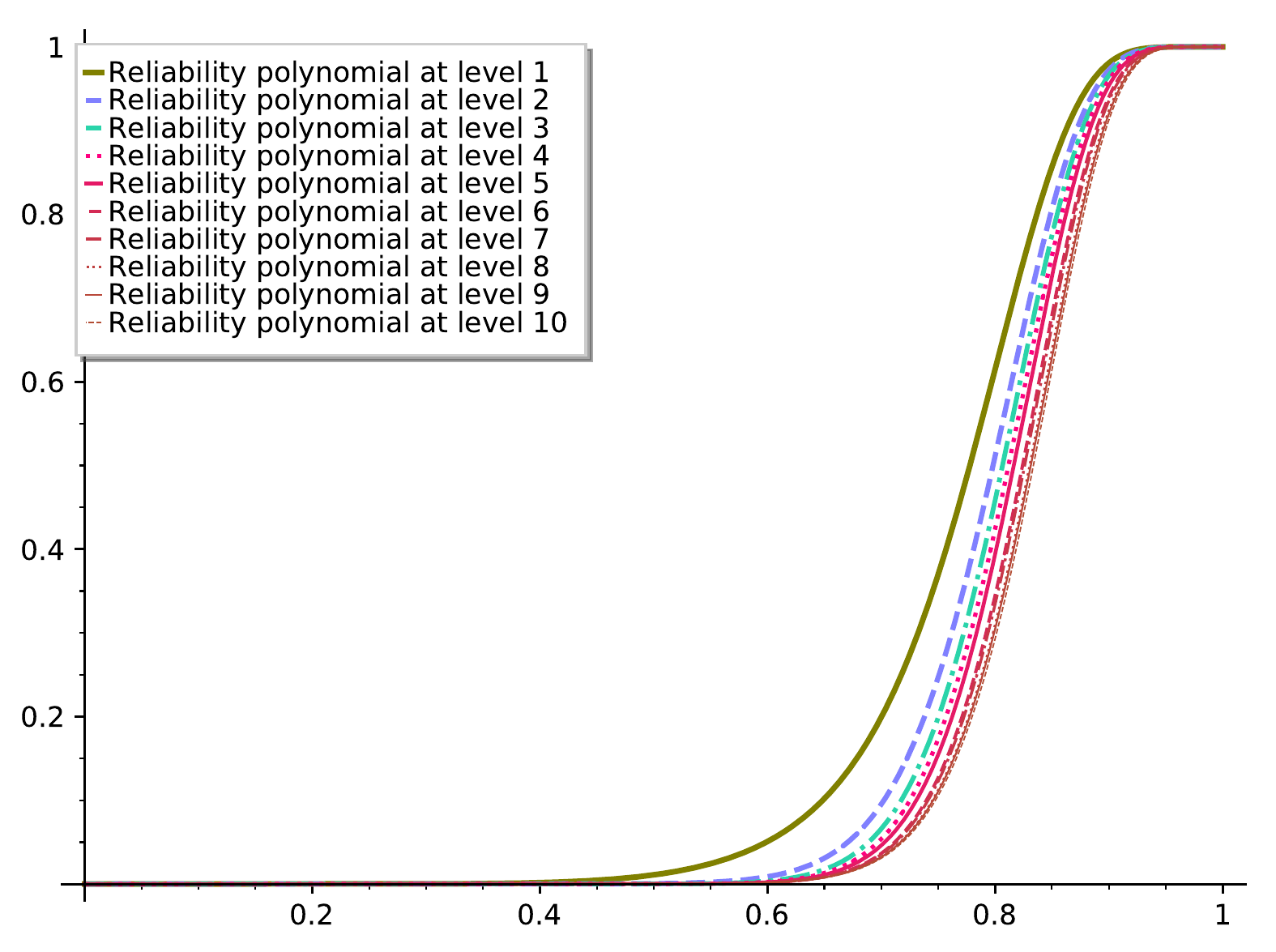}
\caption{Reliability polynomials at all levels for a decreasing MS $k$-out-of-$10$ system with $(k_1,\dots,k_{10})=(9,8,7,6,5,5,4,4,3,2)$.}\label{fig:reliabilities}
\end{center}
\end{figure} 

\smallskip

\noindent {\bf Flow network: }\label{exam:flowNetwork}
A flow nework $S$ has $n$ components $c_i$, $i=1,\dots,n$, where each of them can be in $m_i+1$ states $\Sc_i=\{0,1,\dots,m_i\}$ and the set of states of the system is $\Sc=\{0,\dots,m_1+\cdots+m_n\}$. The structure function of $S$ is $\phi=x_1+\cdots+x_n$. The $j$-reliability ideal $I_j\subseteq R=\kb[x_1,\dots,x_n]$ of $S$ is generated by all monomials in $R$ of degree $j$. These ideals are stable and therefore, the resolution given in \cite{EK90} is minimal and provides a formula for the $j$-reliability of $S$ which can be truncated to obtain bounds.

The authors of \cite{GN17} point out that in more complex systems, the computation of reliability bounds, denoted by $l'^{j}_\phi(\pb)$, $l^{**j}_\phi(\pb)$ and $\tilde{l}^j_\phi(\pb)$, can become quite difficult and computationally expensive. A wide class of those more complicated systems are those in which we can by some means obtain the minimal cuts or paths but their structure is complicated. In these cases the algebraic approach can be a very useful tool.

We consider now an example of a flow network with different levels of performance, see Example 2 in \cite{GN17}. The system $S$ has two components and each of these can be in three states, $\Sc_1=\Sc_2=\{0,1,2\}$. The system itself can be in five states, $\Sc=\{0,1,2,3,4\}$. The structure function of this system is $\phi(\xb)=x_1+x_2$, i.e., the state of the system is the sum of the states of each of its components. The probability that each component is at least in state $1$ is $p^{(1)}=0.9$ and the probability that each component is in state $2$ is $p^{(2)}=0.8$. In \cite{GN17} the authors give several bounds for the reliability of the system at level $j$. One of these bounds is based on minimal paths (denoted $l'^{j}_\phi(\pb)$), another one is based on minimal cuts (denoted $l^{**j}_\phi(\pb)$) and the third one is proposed by the authors and denoted $\tilde{l}^j_\phi(\pb)$. In this case, since the components' probabilities are i.i.d., we know that $\tilde{l}^j_\phi(\pb)$ is sharp \cite{GN17}.

We use now the algebraic method to compute the $j$-reliabilities of this system:
\begin{itemize}
\item[-]{For performance at level $1$ the minimal paths are $(1,0)$ and $(0,1)$, the corresponding ideal is $I_1=\langle x, y\rangle$ whose Hilbert function is $H_{I_1}=x+y-xy$. By substituting the corresponding probabilities we have that $R_{S,1}=0.9+0.9-0.9\cdot 0.9=0.99$}.
\item[-]{For performance at level $2$ the minimal paths are $(2,0)$, $(1,1)$ and $(0,2)$, the corresponding ideal is $I_2=\langle x^2,xy, y^2\rangle$ whose Hilbert function is $H_{I_2}=x^2+xy+y^2-(x^2y+xy^2)$. By substituting the corresponding probabilities we have that $R_{S,2}=0.8+0.9\cdot 0.9+0.8-(0.8\cdot 0.9+0.9\cdot0.8)=0.97$}.
\item[-]{For performance at level $3$ the minimal paths are $(1,2)$ and $(2,1)$, the corresponding ideal is $I_2=\langle x^2y,xy^2\rangle$ whose Hilbert function is $H_{I_2}=x^2y+xy^2-x^2y^2$. By substituting the corresponding probabilities we have that $R_{S,3}=0.8\cdot 0.9+0.9\cdot0.8-0.8\cdot 0.8=0.80$}.
\item[-]{Finally, for performance at level $4$ the only minimal path is $(2,2)$, $I_4=\langle x^2y^2\rangle$, $H_{I_4}=x^2y^2$ and we have that $R_{S,4}=0.8\cdot 0.8=0.64$}.
\end{itemize}


\smallskip

\noindent{\bf Coherent system given by structure function:}\label{exam:2}
Let $S$ be the coherent system with $4$ components $c_1,c_2,c_3,c_4$ such that $c_1,c_3,c_4$ have two possible states $0$ and $1$ meaning failure and working, while $c_2$ has three possible states $0,1,2$. The system $S$ itself can be in two possible states, working $(1)$ or failure $(0)$. The probabilities $p_{i,j}$ that component $i$ is in state $j$ are: $p_{1,0}=0.2$, $p_{1,1}=0.8$, $p_{2,0}=0.3$, $p_{2,1}=0.2$, $p_{2,2}=0.5$, $p_{3,0}=0.1$, $p_{3,1}=0.9$, $p_{4,0}=0.1$, $p_{4,1}=0.9$. 
\begin{table}[h]
\begin{tabular}{r|cccccccccccccccccccccccc}
\hline
$c_1$&$0$&$1$&$0$&$0$&$0$&$0$&$1$&$1$&$1$&$1$&$0$&$0$&$0$&$0$&$0$&$1$&$1$&$1$&$1$&$1$&$0$&$1$&$1$&$1$\\
$c_2$&$0$&$0$&$1$&$2$&$0$&$0$&$1$&$2$&$0$&$0$&$1$&$1$&$2$&$2$&$0$&$1$&$2$&$1$&$2$&$0$&$1$&$2$&$1$&$2$\\
$c_3$&$0$&$0$&$0$&$0$&$1$&$0$&$0$&$0$&$1$&$0$&$1$&$0$&$1$&$0$&$1$&$1$&$1$&$0$&$0$&$1$&$1$&$1$&$1$&$1$\\
$c_4$&$0$&$0$&$0$&$0$&$0$&$1$&$0$&$0$&$0$&$1$&$0$&$1$&$0$&$1$&$1$&$0$&$0$&$1$&$1$&$1$&$1$&$1$&$1$&$1$\\
\hline
$\phi(\xb)$&$0$&$0$&$0$&$1$&$0$&$0$&$1$&$1$&$1$&$0$&$1$&$0$&$1$&$1$&$1$&$1$&$1$&$1$&$1$&$1$&$1$&$1$&$1$&$1$\\
\hline
\end{tabular}
\medskip
\caption{Structure function $\phi(\xb)$ for system $S$.}
\label{table:example1}
\end{table}

We want to study the reliability of $S$ whose structure function $\phi$ is given in Table \ref{table:example1}. One can see from the table that
\[
\overline{\Fc_{S}}=\{(1,1,0,0),(1,0,1,0),(0,2,0,0),(0,1,1,0),(0,0,1,1)\}.
\]
Hence, the reliability ideal of $S$ is $I_{S}=\langle xy,xz,y^2,yz,zt\rangle$. The numerator of the Hilbert series of $I_S$ given by the alternating sum of its Betti numbers is 
\[
H_{I_S}=xy+xz+y^2+yz+zt-(2xyz+xy^2+xzt+y^2z+yzt)+xy^2z+xyzt.
\]
Substituting the probabilities in $H_{I_S}$, we obtain that $R_S=0.9606$. 
On the other hand, the polarization of $I_{S}$ is
\[
I^P_{S}=\langle x_1y_1,x_1z_1,y_1y_2,y_1z_1,z_1t_1 \rangle\subset \kb[x_1,y_1,y_2,z_1,t_1].
\]
Since $I^P_{S}$ is squarefree, we can use its Stanley-Reisner complex $\Delta_{I^P_S}$ to study its algebro-combinatorial features. The facets of the corresponding simplicial complex $\Delta_{I^P_S}$ are $\{x_1,y_2,t_1\},\{y_2,z_1\}$ and $\{y_1,t_1\}$. We can see that the numerator of the Hilbert series of $I^P_S$ is
\begin{eqnarray*}
H_{I_S^P}&=&x_1y_1+x_1z_1+y_1y_2+y_1z_1+z_1t_1\\ & &-(2x_1y_1z_1+x_1y_1y_2+x_1z_1t_1+y_1y_2z_1+y_1z_1t_1)\\
& &+x_1y_1y_2z_1+x_1y_1z_1t_1.
\end{eqnarray*}

Now, we substitute the probabilities, taking into account that $y_1y_2$ corresponds to $\prob(c_2\geq2)$. We obtain $R_S=0.9606$.

\smallskip

Studying the depolarization operation on $I^P_S$ we find that we can use the following sets for depolarizing ideal $I^P_S$  
\[
\sigma_{x_1}=\{x_1\},\ \sigma_{y_1}=\{y_1\},\ \sigma_{y_2}=\{y_1,y_2\},\ \sigma_{z_1}=\{z_1\}\ \text{and}\ \sigma_{t_1}=\{z_1,t_1\}.
\]
Hence, using the partition $\{x_1\},\{y_1,y_2\},$ $\{z_1,t_1\}$ we obtain a depolarization of $I^P_S$  in only three indeterminates, as  $J=\langle ab,ac,b^2,bc,c^2\rangle \subset \kb[a,b,c]$.  The numerator of the Hilbert series of this ideal is 
\[
H_J=ab+ac+b^2+bc+c^2-(2abc+ab^2+ac^2+b^2c+bc^2)+ab^2c+abc^2.
\]
In order to use this expression to evaluate the reliability of $S$ we must keep track of the meaning of the new variables in terms of the ones in $I^P_S$, i.e., the monomial $b^2$ corresponds to $y_1y_2$ which corresponds to $\prob(c_2\geq 2)$ but $c^2$ corresponds to $z_1t_1$ which is evaluated as $\prob(c_3\geq1)\cdot\prob(c_4\geq1)$. Using these evaluations we obtain the same result that $R_S=0.9606$.

\smallskip

\noindent{\bf Depolarization of consecutive $k$-out-of-$n$ systems:}\label{exam:3}
A consecutive $k$-out-of-$n$:G system \cite{KZ03} is a binary system with $n$ components that works whenever $k$ {\em consecutive} components work. The reliability ideal of such a system is $J_{k,n}=\langle x_1\cdots x_k,\dots,x_{n-k+1}\cdots x_n\rangle\subseteq\kb[x_1,\dots,x_n]$. The ideal $J_{k,n}$ has $n-k+1$ generators, all of degree $k$ in $n$ variables, which are corresponding to the set of all $k$-paths of the line graph \cite{HV10}. The depolarization poset of $J_{k,n}$ has a maximal element $J'_{k,n}$ whose ambient ring has $n+2-2k$ variables, and we can use it to compute the reliability of consecutive $k$-out-of-$n$:G systems when $n$ is large.

Table \ref{table:consecutive} shows the timings of an algorithm implemented by the authors using the Hilbert series implementation in \cite{M2} to compute the reliability polynomial of several large consecutive $k$-out-of-$n$ systems. The third column in the table shows the times used to compute the  reliability polynomial using the original squarfree ideal  $J_{k,n}$ and the fourth column shows the times used to compute the reliability polynomial using the maximal depolarization $J'_{k,n}$. The times are in seconds. OOT means the computation was manually stopped after $24$ hours. Observe that the times are reduced due to the reduction of the number of variables in the ambient ring. Working with the maximal depolarization makes it possible to handle bigger cases that are not possible to deal with using the squarefree reliability ideals.
\begin{table}[h]
\begin{tabular}{cccrr}
$n$&$k$&Num. gens.&Time $J_{k,n}$&Time $J'_{k,n}$\cr
\hline
$100$&$30$&$71$&$0.54$&$0.18$\cr
$100$&$15$&$86$&$34.62$&$17.81$\cr
$200$&$60$&$141$&$8.12$&$1.63$\cr
$200$&$30$&$171$&$1936.16$&$883.81$\cr
$300$&$90$&$211$&$56.12$&$8.63$\cr
$300$&$45$&$256$&OOT&$11941.60$\cr
\hline
\end{tabular}
\medskip
\caption{Computing times for the reliability polynomials of several consecutive $k$-out-of-$n$:G system ideals and their maximal depolarizations.}
\label{table:consecutive}
\end{table}

\section*{Acknowledgements}
The authors are partially funded by grant MTM2017-88804-P of Ministerio de Econom\'ia, Industria y Competitividad (Spain). The first author was partially supported by EPSRC grant EP/R023379/1. We are grateful to the anonymous referees for very helpful comments on the earlier version of this paper.

{}

\begin{thebibliography}{}
\bibitem{BK94}
R. A. Boedigheimer, K. C. Kapur, \textit{Customer-driven reliability models for multistate coherent systems.} IEEE Transactions on Reliability, Vol 43, No 1; 1994.
\bibitem{BP75} R.E. Barlow and F. Proschan, {\textit Statistical Theory of Reliability and Life Testing}. Holt, Rinehart and Wiston,
New York, 1975
\bibitem{BM09} C. Berkesch and L.F. Matusevich, \textit{$A$-graded methods for monomial ideals}, Journal of Algebra 322, 2009, pp. 2886--2904.
\bibitem{BG06}I. Bermejo and P. Gimenez, {\textit Saturation and Castelnuovo-Mumford regularity}, Journal of Algebra, 303, 2006, pp. 592--617 
\bibitem{CE95} H. Chalambous and E. G. Evans, Resolutions obtained as iterated mapping cones, Journal of Algebra, 176, 1995, pp. 750-754
\bibitem{DHS13} H. Dao, C. Huneke and J. Schweig, \textit{Bounds on the regularity and projective dimension of ideals associated to graphs}, Journal of Algebraic Combinatorics, 38(1), 2013, pp. 37--55 
\bibitem{DS15} H. Dao and J. Schweig, \textit{Bounding the projective dimension of a squarefree monomial ideal via domination in clutters}, Proc. Amer. Math. Soc. 143 (2015), pp. 555--565
\bibitem{D50} R.P. Dilworth, \textit{A Decomposition Theorem for Partially Ordered Sets}, Annals of Mathematics, 51 (1) 1950, pp. 161--166
\bibitem{DM14} A. Dochtermann and F. Mohammadi, \textit{Cellular resolutions from mapping cones}, Journal of Combinatorial Theory, Series A, 128, 2014, pp.180-206.
\bibitem{EK90} S. Eliahou and M. Kervaire, \textit{Minimal resolutions of some monomial ideals}, Journal of Algebra 129, 1990, pp. 1--25
\bibitem{F05} S. Faridi, \textit{Monomial ideals via squarefree monomial ideals}, arXiv:math/0507238v1
\bibitem{F82} R. Fr\"oberg, \textit{A study of graded extremal rings and of monomial rings}, Mathematica Scandinavica 51, 1982, pp. 22-34
\bibitem{GN17} J. Gasemir and B. Natvig, \textit{Improved availability bounds for binary and multi-state systems with independent component processes}, Journal of Applied Probability 54(3), 2017, pp. 750--762
\bibitem{GW04} B.Giglio and Henry P. Wynn, \textit {Monomial ideals and the scarf complex for coherent systems in reliability theory},
Annals of statistics 32, 2004, pp. 1289--1311
\bibitem{GPW99} V. Gasharov, I. Peeva and V. Welker, \textit {The lcm-lattice in monomial resolutions},
Mathematical Research Letters 6, 1999, pp. 521--532
\bibitem{HV10} J. He and A. Van-Tuyl, \textit{Algebraic properties of the path ideal of a tree}, Comm. Algebra, 38 (5), 2010, 1725--1742
 \bibitem{H66} R. Hartshorne, \textit{Connectedness of the Hilbert scheme}, Inst. Hautes Etudes Sci. Publ. Math. 29, 1966, pp. 5--48.
 \bibitem{HH10} J. Herzog and T. Hibi, \textit{Monomial Ideals}, Graduate Texts in Mathematics 260, Springer, London, 2010.
\bibitem{HM10} H.T. H\`a and S. Morey, \textit{Embedded associated primes of powers of squarefree monomial ideals}, Journal of Pure and Applied Algebra, 214 (4), 2010, pp. 301--308.
\bibitem{HHQ15} J. Herzog, T. Hibi and A.A. Qureshi, \textit{Polarization of Koszul cycles with applications to powers of edge ideals of whisker graphs}, Proceedings of the American Mathematical Society 143(7), 2015, pp. 2767--2778.
\bibitem{HPV03} J. Herzog, D. Popescu, and M. Vladoiu, {\textit On the Ext-modules of ideals of Borel type}. In
{\textit Commutative Algebra}, Contemp. Math. 331, 2003, pp. 171--186. Amer. Math. Soc., Prov-idence.
\bibitem{HZW00} J. Huang, M.J. Zuo and Y. Wu, \textit{Generalized Multi-State k-out-of-n:G Systems}, IEEE Transactions on Reliability 49(1), 2000, pp. 105--111
\bibitem{IKM17} B. Ichim, L. Katth\"an and J.J. Moyano-Fern\'andez, \textit{Stanley depth and the lcm-lattice}, J. Combin. Theory ser. A, 150, 2017, pp. 295--322
\bibitem{J07} A.S. Jahan, \textit{Prime filtrations of monomial ideals and polarizations}, Journal of Algebra 312 (2), 2007, pp. 1011--1032.
\bibitem{KZ03} W. Kuo, M.J. Zuo, \textit{Optimal reliability modelling.} Wiley and Sons, 2003
\bibitem{M2} D. R. Grayson and M. E. Stilllman, \textit{Macaulay2, a software system for research in algebraic geometry}, {Available at \texttt{http://www.math.uiuc.edu/Macaulay2/}}
\bibitem{LL03} A. Lisnianski and G. Levitin, \textit {Multi-state system reliability: assesment, optimization and applications}, World Scientific, 2003.
\bibitem{M08} E. Miller,  \textit{Topological Cohen-Macaulay criteria for monomial ideals}, in V. Ene and E. Miller {\em Combinatorial Aspects of Commutative Algebra} Contemporary Math, 502, 2009, pp.137--155.
\bibitem{MS04} E. Miller, B. Sturmfels, \textit{Combinatorial Commutative Algebra.} Springer, 2004
\bibitem{MMV12} J. Mart\'inez-Bernal, S. Morey and R.H. Villarreal, \textit{Associated primes of powers of edge ideals}, Collectanea Mathematica, 63 (3), 2012, pp. 361--374.
\bibitem{M16} F. Mohammadi, \textit{Divisors on graphs, orientations, syzygies, and system reliability}, Journal of Algebraic Combinatorics 43 (2), 2016, pp. 465--483
\bibitem{N11} B. Natvig \textit{multi-state system reliability theory with applications.} Wiley and Sons, 2011
\bibitem{P06} J. Phan, \textit{Properties of Monomial Ideals and their Free Resolutions}, PhD thesis, Columbia University, 2006.
\bibitem{S83} R. Stanley, \textit{Combinatorics and Commutative Algebra}, Progress in Math. 41, Birkh\"auser, 1983.
\bibitem{MSW17} F. Mohammadi and E. S\'aenz-de-Cabez\'on and H.P. Wynn, {\em Types of signature analysis in reliability based on Hilbert series}, Journal of Symbolic Computation 79, 2017, pp. 140--155.
\bibitem{M2016} F. Mohammadi, \textit{Combinatorial and geometric view of the system reliability theory}, In International Congress on Mathematical Software, Springer International Publishing, 2016 pp.148--153.
\bibitem{MS16} F. Mohammadi and F. Shokrieh, {\em Divisors on graphs, binomial and monomial ideals, and cellular resolutions}, Mathematische Zeitschrift, 283 (1-2), 2016, pp. 59--102
\bibitem{S09} E. S\'aenz-de-Cabez\'on, {\em Multigraded Betti numbers without computing minimal free resolutions}, Applicable Algebra Eng. Commun. Comput., vol. 20, 2009, pp. 481--495.
\bibitem{S09b} W.M. Seiler, {\em A combinatorial approach to involution and delta-regularity II: structure analysis of polynomial modules with Pommaret bases}, Applicable Algebra Eng. Commun. Comput., vol. 20, 2009, pp. 261--338.
\bibitem{SW09} E. S\' aenz-de-Cabez\' on and H.P. Wynn, {\em Betti numbers and minimal free resolutions for multi-state system reliability bounds}, Journal of Symbolic Computation 44, 2009, pp. 1311--1325.
\bibitem{SW10} E. S\'aenz-de-Cabez\'on and H.P. Wynn, {\em Mincut ideals of two-terminal networks}, Applicable Algebra Eng. Commun. Comput., vol. 21, 2010, pp. 443--457.
\bibitem{SW11} E. S\'aenz-de-Cabez\'on and H.P. Wynn, {\em Computational algebraic algorithms for the reliability of generalized $k$-out-of-$n$ and related systems}, Math. Comput. Simulation, vol. 82, 2011, pp. 68--78.
\bibitem{SW12} E. S\'aenz-de-Cabez\'on and H.P. Wynn, \textit {Algebraic reliability based on monomial ideals: A review}, in Harmony of Gr\"obner Basis and The Modern Industrial Society, Wiley \& sons 2012, pp. 314--335.
\bibitem{SW15} E. S\'aenz-de-Cabez\'on and H.P. Wynn, {\em Hilbert functions for design in reliability}, IEEE Transactions on Reliability, vol. 64, no. 1, 2015, pp. 83--93.
\bibitem{V01} R.H. Villarreal, \textit{Monomial Algebras}, Monographs and Textbooks in Pure and Applied Mathematics 238, Marcel Dekker, 2001.
\bibitem{YJ12} G. Yingkui and L. Jing \textit{Multi-State System Reliability: A New and Systematic Review}, Procedia Engineering 29, 2012, pp. 531--536.


\end{thebibliography}
\end{document}